\theoremstyle{definition}
\newtheorem{definition}{Definition}[section]
\theoremstyle{theorem}
\newtheorem{theorem}[definition]{Theorem}
\newtheorem{lemma}[definition]{Lemma}
\newtheorem{corollary}[definition]{Corollary}
\newtheorem{proposition}[definition]{Proposition}
\newcommand{\AND}{\quad \textrm{and} \quad}
\DeclareMathOperator{\meas}{meas}
\DeclareMathOperator{\supp}{supp}
\DeclareMathOperator*{\Res}{Res}
\DeclareMathOperator{\dist}{dist}
\DeclareMathOperator{\Log}{Log}
\renewcommand{\phi}{\varphi}
\renewcommand{\Re}{\mathrm{Re}\hspace{1pt}}
\renewcommand{\Im}{\mathrm{Im}\hspace{1pt}}
\renewcommand{\epsilon}{\varepsilon}
\renewcommand{\mid}{~;~}
\begin{document}

\subjclass{Primary 11M06, Secondary 11M41}

\keywords{Universality for $L$-function, Value-distribution of zeta-functions, Euler product, }

\title[Limit theorem for the hybrid joint universality theorem]{Limit theorem for the hybrid joint universality theorem on zeta and $L$-functions}
\author[K.~Endo]{K.~Endo}

\maketitle

\begin{abstract}
In 1979, Gonek presented the hybrid joint universality theorem for Dirichlet $L$-functions and proved the universality theorem for Hurwitz zeta-functions with rational parameter as an application. 
Following the introduction of the hybrid universality theorem, 
several generalizations, refinements, and applications have been developed. 
Despite these advancements, no probabilistic proof based on Bagchi's approach has been formulated due to the complexities of adapting his method to the hybrid joint universality theorem.
In this paper, we prove the limit theorem for the hybrid joint universality theorem. 
\end{abstract}


\section{Introduction}

Gonek \cite{G1979} presented the hybrid joint universality theorem for Dirichlet $L$-functions and proved the universality theorem for Hurwitz zeta-functions with rational parameters as an application. 
The concept of the hybrid (joint) universality combines the Voronin's universality theorem and the Kronecker-Weyl approximation theorem.
Here, we mention a slight generalization of Gonek's theorem, developed by Kaczorowski and Kulas \cite{KK2007}, which is stated as follows:

\begin{theorem}\label{thm:HU}
Let $K$ be a compact subset of $\{ s \in \mathbb{C} \mid 1/2 < \Re (s) < 1 \}$ with connected complement,
$\chi_1,\, \ldots,\, \chi_r$ be pairwise non-equivalent Dirichlet characters, and $p_{k_1}, \, \ldots,\, p_{k_N}$ be mutually distinct prime numbers.
Suppose that $f_j$ is a non-vanishing continuous function on $K$ and analytic in the interior of $K$ for $j= 1,\, \ldots,\, r$, and $\theta_{p_{k_1}}, \, \ldots, \, \theta_{p_{k_N}}$ are real numbers.
Then, for any $\epsilon>0$, 
\begin{align*}
\liminf_{T \rightarrow \infty} \frac{1}{T} \meas \Bigg{\{} \tau \in [0, T] ~;~ \max_{1 \leq j \leq n} \max_{s \in K} & \left|  L(s + i \tau, \chi_j) - f_j(s) \right| < \epsilon,\\ 
& \max_{1 \leq n \leq N} \left\| \tau \frac{\log p_{k_n}}{2 \pi} - \theta_{p_{k_n}} \right\| < \epsilon \Bigg{\}} > 0
\end{align*}
holds. 
Here $\meas$ denotes the Lebesgue measure on $\mathbb{R}$, and the symbol $\| x \|$ stands for the distance from $x$ to the nearest integer.
\end{theorem}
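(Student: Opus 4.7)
The plan is to reproduce the Bagchi template, suitably enlarged so that the torus coordinates $\tau \log p_{k_n}/(2\pi)$ appear as additional random variables coupled to the $L$-function shifts. Introduce the compact abelian group $\Omega=\prod_{p}\{z\in\mathbb{C}:|z|=1\}$ equipped with its normalized Haar measure $\mathbb{P}$, and let $\omega(p)$ denote the projection onto the $p$-th coordinate. For each character $\chi_j$ define the random Euler product
\[
L(s,\chi_j,\omega) = \prod_{p} \bigl(1 - \chi_j(p)\omega(p) p^{-s}\bigr)^{-1},
\]
which, by the standard second-moment argument, converges $\mathbb{P}$-almost surely to an analytic function on $\{\Re(s)>1/2\}$.

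First I would prove a joint functional limit theorem. On a suitable bounded domain $D \supset K$ inside the critical strip, let $H(D)$ be the space of holomorphic functions with the topology of uniform convergence on compacta. The goal is to show that the distribution on $[0,T]$ of
\[
\mathbf{Z}_T(\tau) := \Bigl( \bigl( L(s+i\tau,\chi_j) \bigr)_{j=1}^{r},\; \bigl( \tau \tfrac{\log p_{k_n}}{2\pi} \bmod 1 \bigr)_{n=1}^{N} \Bigr)
\]
converges weakly, as $T\to\infty$, in $H(D)^{r}\times \mathbb{T}^{N}$, to the distribution of
\[
\mathbf{Z}(\omega) := \Bigl( \bigl( L(s,\chi_j,\omega) \bigr)_{j=1}^{r},\; \bigl( \tfrac{\arg\omega(p_{k_n})}{2\pi} \bmod 1 \bigr)_{n=1}^{N} \Bigr).
\]
The core input is Kronecker--Weyl equidistribution: $\mathbb{Q}$-linear independence of $\{\log p : p \text{ prime}\}$ implies that the flow $t\mapsto (p^{-it})_p$ is uniformly distributed on $\Omega$, so in particular its projection onto the coordinates indexed by $(p_{k_n})$ is jointly uniform, while the Bagchi truncation of Euler products (separating the finite piece from a tail controlled in mean-square) transfers the uniform-flow statement to the analytic coordinates.

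Next I would establish the support theorem for $\mathbf{Z}(\omega)$. The torus marginal is uniform on $\mathbb{T}^N$, so it suffices to show that, conditional on any fixed values of $\omega(p_{k_1}),\ldots,\omega(p_{k_N})$, the random tuple $(L(\cdot,\chi_j,\omega))_{j=1}^{r}$ still has support equal to the set of $r$-tuples of functions that are continuous and non-vanishing on $K$ and analytic in its interior. This reduces to the classical Pechersky--Bagchi denseness argument applied to the sub-product indexed by the primes $p \notin\{p_{k_1},\ldots,p_{k_N}\}$; removing finitely many summands affects neither $\mathbb{Q}$-linear independence nor the Hilbert-space density argument, so the classical conclusion persists.

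Combining the two ingredients, the target $(f_1,\ldots,f_r,\theta_{p_{k_1}},\ldots,\theta_{p_{k_N}})$ lies in the support of $\mathbf{Z}(\omega)$, and the Portmanteau theorem applied to a small open neighborhood yields the desired positive $\liminf$. I expect the main obstacle to be the joint weak convergence step: the torus coordinates are scalar linear functionals of $\tau$, while the $L$-coordinates are infinite-dimensional analytic objects, and one must verify that the truncated Bagchi approximation for $L(s+i\tau,\chi_j)$ depends on the flow in a way that matches the marginalization of $\mathbf{Z}(\omega)$ over the coordinates $p_{k_1},\ldots,p_{k_N}$. This delicate coupling is precisely the difficulty to which the abstract alludes when mentioning the complexities of adapting Bagchi's method to the hybrid setting.
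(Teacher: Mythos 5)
Your proposal follows essentially the same route the paper takes: a joint limit theorem on $H(D)^r \times \mathbb{T}^N$ coupling the shifted $L$-functions with the finite torus flow via Kronecker--Weyl plus Bagchi truncation, a support identification observing that the denseness of the log-Euler-product tails survives fixing the finitely many coordinates $\omega(p_{k_1}),\ldots,\omega(p_{k_N})$, and a portmanteau step. The paper packages the support step in an abstract proposition (Proposition \ref{prop:supp}) on sums of independent $\mathcal{B}$-valued variables rather than your explicit conditioning on the torus coordinates, and it interposes the standard Mergelyan approximation to replace each $f_j$ (defined only on $K$) by a non-vanishing element of $H(D)$ before applying the support theorem, but these are differences of presentation, not of method.
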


Note that several more generalizations and refinements of the hybrid universality theorem have been developed by Pa\' nkowski \cite{P2010, P2013}, and some applications have been developed by Nakamura and Pa\' nkowski \cite{NP2011, NP2012, NP2016}. 

Voronin \cite{V1975} is the first to prove the universality theorem for the Riemann zeta-function. 
After the discovery of Voronin, many generalizations and refinements have been developed. 
We refer the reader to \cite{M2015} for an overview of these developments. 
For the theory and the generalization for other zeta and $L$-function, 
see e.g. \cite{L1996, S2007, K2021, NS2010}. 
One of the most important developments is the probabilistic approach to the universality theorem presented by Bagchi \cite{B1981}. 
Although this approach has often been used to prove the universality, there are no probabilistic proofs for the hybrid universality. 
One of the reasons this approach has not been adopted is that probabilistic proofs are thought to be conceptually more involved (see \cite[the last sentence in p. 221]{KK2007}). 

In this paper, we present the probabilistic proof of the hybrid universality theorem for a wide class of zeta and $L$-functions. 
We adapt the method of Kowalski \cite{K2017, K2021}, 
which is a slightly modified version of Bagchi's method and does not require Prokhorov's theorem and the Birkhoff-Khinchin ergodic theorem in the proof. 
In addition, the denseness lemma for the limit theorem for the hybrid universality in this paper can be proved by a slight modification of the proof of the denseness lemma for the ordinary universality theorem, 
whose statements are given in Lemma \ref{lem:dense} and Proposition \ref{prop:supp}.
Hence, it can be said that the approach in this paper is quite simpler than those previously anticipated. 
Additionally, this method can be said to be useful when combining other types of universality and hybridity. 
Prior to discussing the general case, we see the case of Dirichlet $L$-functions. 

To mention the results, we give some notations. 
Let $\mathbb{T}$ denote the unit circle in the complex plane, that is, $\mathbb{T} = \left\{ z \in \mathbb{C} \mid |z| = 1 \right\}$. 
We put $\mathbb{T}_p = \mathbb{T}$ for any prime number $p$. 
Let $\Omega = \prod_{p} \mathbb{T}_p$, 
where the product runs over all prime numbers $p$. 
For any prime number $p$, 
we denote by $\omega(p)$ the projection of $\omega \in \Omega$ to the coordinate space $\mathbb{T}_p$. 
Since $\mathbb{T}_p$ is a compact Hausdorff topological abelian group, 
there exists the probability Haar measure $\bm{m}_p$ on $\left( \mathbb{T}_p, \mathcal{B} \left( \mathbb{T}_p \right) \right)$. 
Here $\mathcal{B} (S)$ denotes the Borel $\sigma$-field of a topological space $S$.
By the Kolmogorov extension theorem, 
we can construct the probability Haar measure $\bm{m} = \otimes_{p} \bm{m}_p$ on $\left( \Omega, \mathcal{B}\left( \Omega \right) \right)$.
  
For any connected region(open set) $G$ in the complex plane, let $H(G)$ denote the set of holomorphic functions on $G$ equipped with the topology of uniform convergence on compact subsets.
We also define $H_0(G)$ by 
\[
H_0(G)
= \left\{ f \in H(G) \mid \textit{$f(s) \neq 0$ for $s$ $\in$ $G$ or $f \equiv 0$} \right\}.
\]
  
Next we define the random model of the value-distribution for Dirichlet $L$-functions. 
Put 
\[
D = \left\{ s = \sigma + it \mid 1/2 < \sigma < 1 \right\}.
\]
For a Dirichlet character $\chi$, we define the $H(D)$-valued random variable $L(s, \chi, \omega)$ by
\[
L(s, \chi, \omega)
= \prod_{p} \left( 1 - \frac{\chi(p) \omega(p)}{p^{s}} \right)^{-1}. 
\]
Note that it is well-known that $L(s, \chi, \omega)$ converges for $\sigma > 1/2$ almost surely (see e.g. \cite[Lemma 4.2]{S2007}).

In the following, we give the hypothesis of the results for Dirichlet $L$-functions.
Let us fix a $r$-tuple of pairwise non-equivalent Dirichlet characters $\bm{\chi} = (\chi_1,\, \ldots,\, \chi_r)$ and a set of $N$ mutually distinct prime numbers $\mathcal{P}_N = \left\{ p_{k_1}, \, \ldots,\, p_{k_N} \right\}$. 
Denote by $\mathbb{T}^{\mathcal{P}_N}$ the family of elements in $\mathbb{T}$ indexed by $\mathcal{P}_N$, 
that is, $\mathbb{T}^{\mathcal{P}_N} = \left\{ \left( x_p \right)_{p \in \mathcal{P}_N} \mid x_p \in \mathbb{T} \right\}$. 
We define the probability measures $\nu_{T,\, \bm{\chi},\, \mathcal{P}_N }$, $\nu_{\bm{\chi},\, \mathcal{P}_N }$ on $\left(H(D)^r \times \mathbb{T}^{\mathcal{P}_N},\, \mathcal{B}\left(H(D)^r \times \mathbb{T}^{\mathcal{P}_N} \right) \right)$ by
\begin{align*}
&\nu_{T,\, \bm{\chi},\, \mathcal{P}_N }(\bm{A}) 
= \frac{1}{T} \meas\left\{ \tau \in [0, T] \mid \left( \left(L (s + i \tau, \chi_j) \right)_{j = 1}^r,\, (p^{i \tau})_{p \in \mathcal{P}_N} \right) \in \bm{A} \right\}, \\
&\nu_{\bm{\chi},\, \mathcal{P}_N }(\bm{A}) 
= \bm{m}\left( \left\{ \omega \in \Omega \mid \left( \left(L ( s, \chi_j, \omega) \right)_{j = 1}^r,\, (\omega(p))_{p \in \mathcal{P}_N} \right) \in \bm{A} \right\} \right)
\end{align*}
for $\bm{A} \in \mathcal{B}\left( H(D)^r \times \mathbb{T}^{\mathcal{P}_N} \right)$. 

Now we state the limit theorem for the hybrid universality of Dirichlet $L$-functions.

\begin{theorem}\label{thm:main-chi}
Under the above notations and definitions, 
we have the following results:
\begin{enumerate}[label=\rm{(\roman*)}]
\item The probability measures $\nu_{T,\, \bm{\chi},\, \mathcal{P}_N}$ converge weakly to $\nu_{\bm{\chi},\, \mathcal{P}_N }$ as $T \rightarrow \infty$.
\item The support of $\nu_{\bm{\chi},\, \mathcal{P}_N }$ coincides with $H_0(D)^r \times \mathbb{T}^{\mathcal{P}_N}$.
\end{enumerate}
\end{theorem}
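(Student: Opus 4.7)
The plan is to follow Kowalski's streamlined version of Bagchi's probabilistic framework, treating the hybrid coordinates $(p^{i\tau})_{p \in \mathcal{P}_N}$ as an additional continuous projection onto a finite-dimensional factor of $\Omega$. The key structural input is that $\Omega$ is a compact abelian group whose Haar measure $\bm m$ is precisely the law of the random Euler factors, and that the one-parameter subgroup $\tau \mapsto (p^{-i\tau})_p$ is equidistributed in $\Omega$ by the Kronecker--Weyl theorem, since $\{\log p / 2\pi : p \text{ prime}\}$ is linearly independent over $\mathbb{Q}$.

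For part (i), I would start with the truncated Euler products
\[
L_M(s,\chi_j,\omega) = \prod_{p \leq M} \left( 1 - \frac{\chi_j(p)\omega(p)}{p^{s}} \right)^{-1}.
\]
The map $\omega \mapsto \bigl( (L_M(s,\chi_j,\omega))_{j=1}^r,\, (\omega(p))_{p \in \mathcal{P}_N} \bigr)$ is continuous from $\Omega$ into $H(D)^r \times \mathbb{T}^{\mathcal{P}_N}$, so equidistribution of the orbit yields weak convergence of the $T$-averaged law for the truncated object to its Haar-measure counterpart. It then remains to pass $M \to \infty$, which is done via the standard mean-square estimate: on the probabilistic side, absolute convergence of $\mathbb{E}\bigl[ |L(s,\chi_j,\omega) - L_M(s,\chi_j,\omega)|^2 \bigr]$ for $\Re s > 1/2$; on the arithmetic side, the classical mean-value bound for $\int_0^T |L(s+i\tau,\chi_j) - L_M(s+i\tau,\chi_j)|^2 \, d\tau$. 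A three-epsilon argument with respect to a metric generating the product topology on $H(D)^r \times \mathbb{T}^{\mathcal{P}_N}$ then transfers weak convergence from the truncated object to the full one.

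For part (ii), the inclusion $\supp \nu_{\bm\chi,\mathcal{P}_N} \subset H_0(D)^r \times \mathbb{T}^{\mathcal{P}_N}$ is immediate from Hurwitz's theorem, since the almost surely convergent Euler product is nonvanishing on $D$. For the reverse inclusion, split $\Omega = \mathbb{T}^{\mathcal{P}_N} \times \Omega'$ with $\Omega' = \prod_{p \notin \mathcal{P}_N} \mathbb{T}_p$. Fix a target $(f_1,\ldots,f_r,(z_p)_{p \in \mathcal{P}_N}) \in H_0(D)^r \times \mathbb{T}^{\mathcal{P}_N}$ and any neighborhood $\bm U$. Conditioning on $\omega(p)$ lying in a small arc around $z_p$ for $p \in \mathcal{P}_N$ multiplies each $L(s,\chi_j,\omega)$ by an essentially fixed nonvanishing factor, namely the finite Euler subproduct indexed by $\mathcal{P}_N$. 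The residual problem reduces to approximating the modified targets $\tilde f_j$ by Euler products over primes \emph{outside} $\mathcal{P}_N$, which is the content of Lemma~\ref{lem:dense} and Proposition~\ref{prop:supp}: the Bagchi/Pechersky rearrangement of the series $\sum_p \log(1 - \chi_j(p)\omega(p)p^{-s})^{-1}$ carries over verbatim after deleting finitely many primes, because the divergence condition $\sum_p p^{-2\sigma} = \infty$ on $1/2 < \sigma < 1$ is unaffected by removing an initial segment.

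The main obstacle I anticipate is the vector-valued denseness step for several characters with the $N$ primes removed: one must verify that the family $\{(\chi_1(p)p^{-s},\ldots,\chi_r(p)p^{-s})\}_{p \notin \mathcal{P}_N}$ still spans a dense $\mathbb{R}$-linear subspace (under unimodular rotations of each coefficient) of the relevant real Hilbert space of $H(D)^r$-valued functions. Orthogonality of pairwise non-equivalent characters, combined with prime-number-theorem-type equidistribution of $\chi_j(p)$, is what drives this in the classical setting; deletion of $N$ primes affects only a finite-dimensional quotient and therefore cannot destroy density. Once this joint denseness is in hand, translation invariance of the Haar measure $\bm m$ together with the positivity of $\bm m$ on nonempty open cylinder sets converts denseness into positivity of $\nu_{\bm\chi,\mathcal{P}_N}(\bm U)$, which completes the support description.
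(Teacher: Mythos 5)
Your proposal is correct and follows essentially the same route the paper takes to prove this theorem (the paper derives it as the special case of its general Theorem~\ref{thm:main}, using Bagchi's joint density result for non-equivalent Dirichlet characters to verify hypothesis~\eqref{eqn:DSC}). The only notable technical variance is that you truncate via Euler products $L_M(s,\chi_j,\omega)=\prod_{p\le M}(1-\chi_j(p)\omega(p)p^{-s})^{-1}$ whereas the paper instead uses smoothed Dirichlet polynomials $\phi_X(s)=\sum_n a_\phi(n)\lambda(n/X)n^{-s}$ together with Mellin-transform estimates for the mean-square comparison (Propositions~\ref{prop:MT} and~\ref{prop:LMRM}), but the three-$\epsilon$ structure and the finite-torus equidistribution input (Lemma~\ref{lem:HM}), as well as the denseness-of-logarithms plus independent-sums-support argument for part~(ii) (Lemma~\ref{lem:dense}, Proposition~\ref{prop:supp}, Lemma~\ref{lem:supplog}), match yours in substance.
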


\begin{corollary} Theorem \ref{thm:main-chi} implies Theorem \ref{thm:HU}. 
\end{corollary}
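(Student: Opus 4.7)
The plan is to recast the event in Theorem~\ref{thm:HU} as ``the random element $\bigl((L(s+i\tau,\chi_j))_{j=1}^r,(p^{i\tau})_{p\in\mathcal{P}_N}\bigr)$ lies in a prescribed open set $U$'', and then combine part~(i) of Theorem~\ref{thm:main-chi} (weak convergence, via the Portmanteau theorem) with part~(ii) (support identification).

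First I would introduce the open set
\[
U_1 := \left\{(g_1,\ldots,g_r) \in H(D)^r : \max_{1\leq j\leq r}\max_{s\in K}|g_j(s)-f_j(s)|<\epsilon\right\},
\]
which is open because evaluation on a fixed compact subset of $D$ is continuous on $H(D)$. Writing $p_{k_n}^{i\tau}=\exp\bigl(2\pi i\cdot \tau(\log p_{k_n})/(2\pi)\bigr)$, the condition $\|\tau(\log p_{k_n})/(2\pi)-\theta_{p_{k_n}}\|<\epsilon$ translates into $p_{k_n}^{i\tau}\in V_n$ for a suitable open arc $V_n\subset\mathbb{T}$ centred at $e^{2\pi i\theta_{p_{k_n}}}$. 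Setting $U := U_1 \times \prod_{n=1}^N V_n$, the Lebesgue-measure quantity in Theorem~\ref{thm:HU} equals $\nu_{T,\bm{\chi},\mathcal{P}_N}(U)$, and by part~(i) together with Portmanteau one obtains $\liminf_{T\to\infty}\nu_{T,\bm{\chi},\mathcal{P}_N}(U)\geq \nu_{\bm{\chi},\mathcal{P}_N}(U)$. It therefore suffices to verify $\nu_{\bm{\chi},\mathcal{P}_N}(U)>0$, and by part~(ii) this reduces to exhibiting any point of $U$ lying in $H_0(D)^r\times\mathbb{T}^{\mathcal{P}_N}$.

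The second coordinate is immediate: the centre $(e^{2\pi i\theta_{p_{k_n}}})_n$ belongs to $\prod_n V_n\subset\mathbb{T}^{\mathcal{P}_N}$. The substantive step is to produce $g_j\in H_0(D)$ with $\max_{s\in K}|g_j(s)-f_j(s)|<\epsilon$ for each $j$. Because $K$ is compact with connected complement and $f_j$ is continuous, non-vanishing on $K$, and analytic on its interior, a continuous branch $h_j$ of $\log f_j$ exists on $K$ and is analytic on the interior. Mergelyan's theorem then supplies polynomials $q_j$ with $\max_K|h_j-q_j|$ as small as we like; the functions $g_j:=\exp(q_j)$ are entire and non-vanishing, hence lie in $H_0(D)$, and approximate $f_j$ on $K$ within $\epsilon$. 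This yields the desired point in $U\cap(H_0(D)^r\times\mathbb{T}^{\mathcal{P}_N})$, so $\nu_{\bm{\chi},\mathcal{P}_N}(U)>0$ and Theorem~\ref{thm:HU} follows.

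The only step that is not a formal consequence of Theorem~\ref{thm:main-chi} is the Mergelyan/continuous-logarithm construction of $g_j$; this is a standard ingredient of Bagchi-type universality arguments, so I expect no substantive difficulty beyond citing it cleanly.
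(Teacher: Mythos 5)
Your argument is correct and follows the same overall strategy as the paper's (which carries out this deduction in the general setting, in the proof of Corollary~\ref{cor:main}): form open sets $U_1$ and $V_n$ from the universality conditions, apply the portmanteau theorem with part~(i), and then use part~(ii) by exhibiting a point of $H_0(D)^r \times \mathbb{T}^{\mathcal{P}_N}$ inside the open set via Mergelyan and exponentiation. The one genuine (if minor) divergence is in how the non-vanishing approximant $g_j = \exp(q_j)$ is produced: you apply Mergelyan once, directly to a continuous branch $h_j$ of $\log f_j$, which requires invoking the topological fact that a continuous non-vanishing function on a compact set with connected complement admits a continuous logarithm (and that this logarithm is analytic on the interior). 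The paper instead applies Mergelyan twice: first to get a non-vanishing polynomial $P_j$ close to $f_j$, which automatically has a holomorphic logarithm on a neighborhood of $K_j$, and then to approximate $\log P_j$ by a polynomial $Q_j$. The paper's route avoids the continuous-logarithm lemma at the cost of a second Mergelyan step; yours is slightly shorter but should state that topological input explicitly. Also, you should record (as you implicitly do) that $\exp$ is Lipschitz on the bounded range of $h_j$ over $K$, so that smallness of $\max_K|h_j - q_j|$ does transfer to smallness of $\max_K|f_j - \exp(q_j)|$. With those two remarks made explicit, your proof is complete and matches the paper in spirit.
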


\section{Main result}

In this paper, we show the limit theorem for the joint hybrid universality for a wide class of zeta and $L$-functions which includes Dirichlet $L$-functions. 

Let $\phi(s)$ be a Dirichlet series given by
\[
\phi(s)
= \sum_{n = 1}^\infty \frac{a_{\phi}(n)}{n^s}. 
\]
The Dirichlet series $\phi(s)$ is said to belong to the class $\widetilde{\mathcal{S}}$ if it satisfies the following axioms $(S1)$-$(S4)$:
\begin{enumerate}

\item[(S1)] \textit{Ramanujan hypothesis}. The Dirichlet coefficients $a_{\phi}(n)$ satisfy $a_{\phi}(n) \ll_{\epsilon} n^{\epsilon}$ for every $\epsilon>0$.

\item[(S2)] \textit{Analytic continuation}. There exists $\sigma_{1}(\phi) < 1$ such that such that $\phi(s)$ can be meromorphically continued to the half plane $\sigma > \sigma_{1}(\phi)$ and is holomorphic except for at most a pole at $s = 1$.

\item[(S3)] \textit{Finite order}. For any $\epsilon > 0$ and $\sigma_1(\phi) < \sigma_1 < \sigma_2 $, 
there exists a constant $C = C(\epsilon, \sigma_1, \sigma_2; \phi)$ such that $\phi(\sigma + it) \ll |t|^{C}$ uniformly for $\sigma \leq \sigma \leq \sigma_2$ as $|t| \rightarrow \infty$, 
where the implied constant may depend on $\epsilon, \sigma_1, \sigma_2, \phi$.

\item[(S4)] \textit{Polynomial Euler product}. $\phi(s)$ is expressed by the infinite product
\[
\phi(s)
= \prod_{p} \prod_{j = 1}^{m_{\phi}} \left( 1 - \frac{\alpha_{j,\, \phi} (p)}{p^s} \right)^{-1}
\]
where $m_{\phi}$ is a positive integer, and $\alpha_{j,\, \phi} (p) \in \mathbb{C}$ for $j = 1, \ldots, m$.
\end{enumerate}
Denote $\sigma_\phi$ the infimum of all $\sigma_2(\phi) \geq 1/2$ such that
\[
\frac{1}{2T} \int_{- T}^{T} \left| \phi( \sigma + it ) \right|^2 dt \sim \sum_{n = 1}^\infty \frac{ |a_{\phi}(n)|^2 }{n^{2 \sigma}}
\]
holds for any $\sigma \geq \sigma_{2}(\phi)$. 
We further denote $D_{\phi}$ by $D_{\phi} = \{ s \mid \sigma_{\phi} < \sigma < 1 \}$. 

Note that the class $\widetilde{\mathcal{S}}$ is a slightly wide class of the Steuding class $\mathcal{S}$ (see \cite{S2007}).
If $\phi(s) \in \widetilde{S}$ satisfies the following condition (S5), 
$\phi(s)$ is called to belong to the Steuding class $\mathcal{S}$:
\begin{itemize}
\item[(S5)] \textit{Prime mean-square}. There exists a positive constant $\kappa$ such that
\[
\lim_{x \rightarrow \infty} \frac{1}{\pi(x)} \sum_{p \leq x} \left| a_\phi (p) \right|^2 = \kappa.
\]
\end{itemize}

Next, we define a random model for the value distribution of an element in $\widetilde{\mathcal{S}}$. 
For $\phi \in \widetilde{\mathcal{S}}$ and $\omega \in \Omega$, 
we define $\phi(s,\, \omega)$ by
\[
\phi(s, \omega) 
= \prod_{p} \prod_{j = 1}^{m_{\phi}} \left( 1 - \frac{\alpha_{j,\, \phi}(p) \omega(p) }{p^s} \right)^{-1}, 
\]
which is known to converge for $\sigma > 1/2$ almost everywhere (see \cite[p. 65]{S2007}). 
Hence we treat $\phi(s,\, \omega)$ as an $H(D_{\phi})$-valued random variable. 

To state the main theorem, let us fix $\phi_1, \ldots, \phi_r \in \widetilde{\mathcal{S}}$ and a set of $N$ mutually distinct prime numbers $\mathcal{P}_N = \left\{ p_{k_1}, \, \ldots,\, p_{k_N} \right\}$.
We use the notations
\[
\underline{\phi}(s) = \left( \phi_1(s), \ldots, \phi_r(s) \right) \AND 
\underline{\phi}(s, \omega) = \left( \phi_1(s, \omega), \ldots, \phi_r(s, \omega) \right) .
\]
Note that $\underline{\phi}(s, \omega)$ can be treated as a $\prod_{j = 1}^r H(D_{\phi_j})$-valued random variable. 
We define the probability measures $\nu_{T, \underline{\phi},  \mathcal{P}_N }$, $\nu_{\underline{\phi},\, \mathcal{P}_N }$ on $\left( \prod_{j = 1}^r H(D_{\phi_j}) \times \mathbb{T}^{\mathcal{P}_N}, \, \mathcal{B}\left( \prod_{j = 1}^r H(D_{\phi_j}) \times \mathbb{T}^{\mathcal{P}_N} \right) \right)$ by
\begin{align*}
&\nu_{T, \underline{\phi},  \mathcal{P}_N }\left( \bm{A} \right)
= \frac{1}{T} \meas\left\{ \tau \in [0, T] \mid \left( \underline{\phi}( s+ i \tau),\, (p^{i \tau})_{p \in \mathcal{P}_N} \right) \in \bm{A} \right\}, \\
&\nu_{\underline{\phi}, \mathcal{P}_N } \left( \bm{A} \right)
= \bm{m} \left( \left\{ \omega \in \Omega \mid \left( \underline{\phi}(s, \omega),\, (\omega(p))_{p \in \mathcal{P}_N} \right) \in \bm{A}  \right\}\right). 
\end{align*}

Then the main theorem is as follows:

\begin{theorem}\label{thm:main}
Let $\phi_1, \ldots, \phi_r \in \widetilde{\mathcal{S}}$ and let $\mathcal{P}_N = \left\{ p_{k_1}, \, \ldots,\, p_{k_N} \right\}$ be a set of $N$ mutually distinct prime numbers. 
Assume that the set of all convergent elements
\begin{equation}\label{eqn:DSC}
\left( \sum_{p > X} \frac{a_{\phi_1}(p) c(p)}{p^{s}}, \ldots, \sum_{p > X} \frac{a_{\phi_r}(p) c(p)}{p^{s}} \right),\, c(p) \in \mathbb{T}
\end{equation}
is dense in $\prod_{j = 1}^r H(D_{\phi_j})$ for any $X>0$.
Then we have the following:
\begin{enumerate}[label=\rm{(\roman*)}]
\item The probability measures $\nu_{T, \underline{\phi},  \mathcal{P}_N }$ converge weakly to $\nu_{\underline{\phi},\, \mathcal{P}_N }$ as $T \rightarrow \infty$.
\item The support of $\nu_{\underline{\phi},\, \mathcal{P}_N }$ coincides with $\prod_{j = 1}^r H_0(D_{\phi_j}) \times \mathbb{T}^{\mathcal{P}_N}$.
\end{enumerate}
\end{theorem}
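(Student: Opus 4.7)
The plan is to adapt Kowalski's streamlined version of Bagchi's probabilistic method, augmented to handle the additional torus factor $\mathbb{T}^{\mathcal{P}_N}$. For part (i), I would first introduce the truncated Euler product models
\[
\phi_{j,X}(s, \omega) = \prod_{p \leq X} \prod_{k=1}^{m_{\phi_j}} \left(1 - \frac{\alpha_{k,\phi_j}(p)\,\omega(p)}{p^s}\right)^{-1},
\]
and set $\underline{\phi}_X(s,\omega) = (\phi_{1,X}(s,\omega), \ldots, \phi_{r,X}(s,\omega))$. The map
\[
\Phi_X : \omega \longmapsto \bigl( \underline{\phi}_X(\cdot,\omega),\, (\omega(p))_{p \in \mathcal{P}_N} \bigr)
\]
is continuous from $\Omega$ into $\prod_j H(D_{\phi_j}) \times \mathbb{T}^{\mathcal{P}_N}$ and factors through the finite-dimensional subtorus indexed by $\{p : p \leq X\} \cup \mathcal{P}_N$. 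Since $\{(\log p)/2\pi\}_{p \text{ prime}}$ is linearly independent over $\mathbb{Q}$, the Kronecker--Weyl equidistribution theorem yields weak convergence of the push-forward of $(1/T)\,d\tau|_{[0,T]}$ under $\tau \mapsto (p^{i\tau})_p$ to the Haar measure $\bm{m}$ on $\Omega$, and continuity of $\Phi_X$ upgrades this to weak convergence of the truncated analogue of $\nu_{T,\underline{\phi},\mathcal{P}_N}$ to $(\Phi_X)_\ast \bm{m}$ as $T \to \infty$.

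Next, I would pass from the truncated to the full model using axioms (S1)--(S4) and the definition of $\sigma_\phi$, together with standard mean-square estimates (cf.\ \cite{S2007, K2017}). Specifically, for each compact $K_j \subset D_{\phi_j}$ one shows
\[
\lim_{X \to \infty} \limsup_{T \to \infty} \frac{1}{T} \int_0^T \sup_{s \in K_j} \left| \phi_j(s + i\tau) - \phi_{j,X}(s + i\tau) \right| d\tau = 0,
\]
together with the companion statement that $\phi_{j,X}(\cdot, \omega) \to \phi_j(\cdot, \omega)$ in $H(D_{\phi_j})$ in $\bm{m}$-probability as $X \to \infty$. Combined with the truncated weak convergence, a standard bounded-Lipschitz argument then upgrades these approximations to $\nu_{T,\underline{\phi},\mathcal{P}_N} \Rightarrow \nu_{\underline{\phi},\mathcal{P}_N}$, establishing part (i).

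For part (ii), the inclusion $\supp \nu_{\underline{\phi}, \mathcal{P}_N} \subset \prod_j H_0(D_{\phi_j}) \times \mathbb{T}^{\mathcal{P}_N}$ holds since each $\phi_j(\cdot,\omega)$ is $\bm{m}$-a.s.\ the non-vanishing holomorphic function arising from the convergent Euler product. For the reverse inclusion, fix compacta $K_j \subset D_{\phi_j}$, a target $(f_1,\ldots,f_r,(a_p)_{p \in \mathcal{P}_N})$ with each $f_j \in H_0(D_{\phi_j})$, and $\epsilon > 0$. Choose $X_0 > \max \mathcal{P}_N$, set $\omega(p) = a_p$ for $p \in \mathcal{P}_N$, and choose values for the remaining $p \leq X_0$ so that $\phi_{j,X_0}(\cdot,\omega)$ is non-vanishing on $K_j$. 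Provisionally define
\[
g_j(s) := \log f_j(s) - \log \phi_{j,X_0}(s,\omega)
\]
with appropriate branches of the logarithms on $K_j$. The density hypothesis \eqref{eqn:DSC} with $X = X_0$ yields coefficients $c(p) \in \mathbb{T}$, $p > X_0$, for which $\bigl(\sum_{p > X_0} a_{\phi_j}(p) c(p)/p^s\bigr)_j$ lies within $\epsilon/2$ of $(g_j)_j$. Setting $\omega(p) = c(p)$ for $p > X_0$ and expanding the tail Euler product, the higher-order prime contributions $\sum_{n \geq 2}$ are controlled by axioms (S1) and (S4), so that $\underline{\phi}(\cdot,\omega)$ comes within $\epsilon$ of $(f_j)_j$ while the prescribed coordinates $(\omega(p))_{p \in \mathcal{P}_N} = (a_p)$ are preserved by construction.

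The principal technical hurdle is the interaction between the prescribed hybrid coordinates and the denseness construction, and the crucial simplification is the observation that by taking $X_0$ strictly larger than $\max \mathcal{P}_N$, the density hypothesis \eqref{eqn:DSC} operates only on primes disjoint from $\mathcal{P}_N$ and therefore never perturbs the prescribed torus coordinates. This localization converts the classical Bagchi denseness lemma into its hybrid version with essentially no substantive change, which, combined with the Kronecker--Weyl step in (i), is what allows the probabilistic framework to bypass the usual appeals to Prokhorov's theorem and the Birkhoff--Khinchin ergodic theorem promised in the introduction.
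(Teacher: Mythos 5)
Your outline for part (i) is sound and close in spirit to the paper, though you work with truncated Euler products $\prod_{p \le X}$ while the paper uses the smoothed Dirichlet polynomial $\phi_X(s) = \sum_n a_\phi(n)\lambda(n/X)n^{-s}$ and its Mellin-transform contour shift; both approximants support the mean-square comparison and the finite-torus Kronecker--Weyl step, and the paper's choice mainly streamlines the uniform mean-value estimate (its Lemma \ref{lem:UME}). A small inefficiency in your write-up: you invoke equidistribution of $\tau \mapsto (p^{i\tau})_p$ on the \emph{infinite} torus $\Omega$, whereas the paper only ever uses the finite-dimensional version through $\mathbb{T}^{\mathcal{P}(\lambda,X)}$, which is the point of factoring $\Phi_X$ through finitely many coordinates; you also note this factoring, so nothing is broken, but the infinite-torus statement is heavier machinery than needed.

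For part (ii) there is a genuine gap. You correctly identify the denseness construction --- fix $X_0 > \max\mathcal{P}_N$, pin $\omega(p)$ for $p \le X_0$ (in particular $\omega(p) = a_p$ for $p \in \mathcal{P}_N$), use \eqref{eqn:DSC} to pick $(c(p))_{p>X_0}$ approximating the logarithmic remainder, and absorb the $k \ge 2$ terms of the Euler factors via (S1), (S4). But exhibiting one convergent element $\omega^* \in \Omega$ with $\Phi(\omega^*)$ close to the target does not show that the target lies in $\supp(\nu_{\underline{\phi},\mathcal{P}_N})$: the support is the set of points whose every open neighborhood has \emph{positive measure}, and $\{\omega^*\}$ has measure zero. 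The bridge the paper supplies --- and that your argument is silently relying on --- is Proposition \ref{prop:supp}, a support theorem for sums of independent Fr\'echet-space-valued random variables, extended to include the finite-coordinate projection $(\omega(p))_{p\in\mathcal{P}_N}$. That proposition (splitting the series into a finite partial sum $\underline{S}_{N_0}$, a tail $\underline{R}_{N_0}$, and using independence, almost-sure convergence, and the fact that each $\omega(p)$ has full support $\mathbb{T}_p$) is what converts the closure-of-range computation from your denseness step into the positive-measure statement you need. Without it, or an equivalent independence argument, the reverse inclusion $\prod_j H_0(D_{\phi_j}) \times \mathbb{T}^{\mathcal{P}_N} \subset \supp(\nu_{\underline{\phi},\mathcal{P}_N})$ is not established. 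The forward inclusion is also stated too casually: the a.s.\ non-vanishing of each $\phi_j(\cdot,\omega)$ puts the support inside the \emph{closure} of the non-vanishing functions, and you need Hurwitz's theorem to identify that closure with $H_0(D_{\phi_j})$, as the paper does at the end of its proof of (ii).
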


\begin{corollary}\label{cor:main}
Let every settings and assumptions be the same as in Theorem \ref{thm:main}.
Let $K_j$ be a compact subset of $D_{\phi_j}$ with connected complement for $j = 1, \ldots, r$. 
Suppose that $f_j$ is a non-vanishing continuous function on $K_j$ and analytic in the interior of $K_j$ for $j= 1,\, \ldots,\, r$, and that $\theta_{p_{k_1}}, \, \ldots, \, \theta_{p_{k_N}}$ are real numbers.
Then, for any $\epsilon>0$, we have
\begin{align*}
\liminf_{T \rightarrow \infty} \frac{1}{T} \meas \Bigg{\{} \tau \in [0, T] ~;~ \max_{1 \leq j \leq r} \max_{s \in K_j} & \left|  \phi_j(s + i \tau) - f_j(s) \right| < \epsilon,\\ 
& \max_{1 \leq n \leq N} \left\| \tau \frac{\log p_{k_n}}{2 \pi} - \theta_{p_{k_n}} \right\| < \epsilon \Bigg{\}} > 0.
\end{align*}
\end{corollary}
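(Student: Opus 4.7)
The plan is to deduce Corollary \ref{cor:main} directly from Theorem \ref{thm:main} via the portmanteau theorem; this reduction is mostly mechanical once parts (i) and (ii) are in hand. First I introduce an open set of the product space $\prod_{j=1}^r H(D_{\phi_j}) \times \mathbb{T}^{\mathcal{P}_N}$ whose preimage under $\tau \mapsto (\underline{\phi}(s+i\tau),\, (p^{i\tau})_{p \in \mathcal{P}_N})$ is contained in the set measured in the corollary. Since $p^{i\tau} = e^{2\pi i (\tau \log p / (2\pi))}$, I choose $\delta = \delta(\epsilon) > 0$ small enough that $|z - e^{2\pi i \theta}| < \delta$ entails $\| \tfrac{1}{2\pi}\arg z - \theta \| < \epsilon$, and set
$$
G = \Bigl\{ \left( (g_j)_j, (z_p)_p \right) \,:\, \max_{1 \leq j \leq r} \sup_{s \in K_j} |g_j(s) - f_j(s)| < \epsilon,\ \max_{1 \leq n \leq N} |z_{p_{k_n}} - e^{2\pi i \theta_{p_{k_n}}}| < \delta \Bigr\}.
$$
This set is open because the restriction map $H(D_{\phi_j}) \to C(K_j)$ is continuous in the topology of uniform convergence on compacta, and $\mathbb{T}^{\mathcal{P}_N}$ carries the Euclidean metric.

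Next I would verify that $\nu_{\underline{\phi}, \mathcal{P}_N}(G) > 0$ by exhibiting a point of $\supp \nu_{\underline{\phi}, \mathcal{P}_N}$ lying in $G$, and invoking Theorem \ref{thm:main}~(ii). Since each $f_j$ is non-vanishing and continuous on $K_j$, holomorphic in $\Int K_j$, and $K_j$ has connected complement, $\log f_j$ admits a continuous, interior-analytic branch on $K_j$. Mergelyan's theorem then yields polynomials $P_j(s)$ making $\sup_{s \in K_j} |\log f_j(s) - P_j(s)|$ arbitrarily small, so that $g_j := e^{P_j}$ is entire and nowhere zero, with $\sup_{s \in K_j} |g_j(s) - f_j(s)| < \epsilon/2$. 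The restrictions $g_j|_{D_{\phi_j}}$ lie in $H_0(D_{\phi_j})$, so the point $\bigl( (g_j|_{D_{\phi_j}})_j,\, (e^{2\pi i \theta_{p_{k_n}}})_n \bigr)$ belongs to $\prod_{j=1}^r H_0(D_{\phi_j}) \times \mathbb{T}^{\mathcal{P}_N} = \supp \nu_{\underline{\phi}, \mathcal{P}_N}$. Because the inequalities defining $G$ are strict, a whole neighborhood of this point is contained in $G$, and the support characterization forces $\nu_{\underline{\phi}, \mathcal{P}_N}(G) > 0$.

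Finally, applying the portmanteau theorem to the weak convergence in Theorem \ref{thm:main}~(i) and the open set $G$,
$$
\liminf_{T \to \infty} \nu_{T, \underline{\phi}, \mathcal{P}_N}(G) \geq \nu_{\underline{\phi}, \mathcal{P}_N}(G) > 0,
$$
and since the preimage of $G$ under $\tau \mapsto (\underline{\phi}(s+i\tau),\, (p^{i\tau})_p)$ is contained in the set measured in Corollary \ref{cor:main}, the conclusion follows. There is no real obstacle here: the substantive content of the hybrid universality is packaged in Theorem \ref{thm:main}, and the deduction uses only the standard Mergelyan trick for approximating $f_j$ by a non-vanishing entire function through its logarithm.
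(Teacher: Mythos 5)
Your proof is correct and follows the same overall strategy as the paper: exhibit an open set $\bm{A}$ whose $\nu_{\underline{\phi},\mathcal{P}_N}$-measure is positive by finding a support point inside it, then invoke the portmanteau theorem. The one structural difference is in how the non-vanishing approximant is produced. You take a single Mergelyan step: using that $f_j$ is non-vanishing, continuous on $K_j$, analytic in the interior, and $K_j$ has connected complement, you assert a continuous interior-analytic branch of $\log f_j$ on $K_j$ exists, approximate that by a polynomial $P_j$, and set $g_j=e^{P_j}$. The paper instead uses two Mergelyan steps: first approximate $f_j$ by a polynomial $P_j$ that is non-vanishing on $K_j$, extend it to a simply connected neighborhood $G_j\supset K_j$ on which it does not vanish, take a holomorphic branch $\log P_j$ there, and then approximate $\log P_j$ by a second polynomial $Q_j$, setting the support point to $\exp(Q_j)$. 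Both routes reach the identical conclusion, and both implicitly use a topological fact about compact sets with connected complement; the paper's variant needs only the existence of a simply connected neighborhood avoiding finitely many zeros (so the logarithm lives on a genuine simply connected domain), whereas your variant relies on the somewhat stronger statement that a non-vanishing \emph{continuous} function on such a compact set already admits a continuous logarithm. That lemma is true and standard in the universality literature, but it is worth stating explicitly rather than leaving it to the connected-complement hypothesis, since it is precisely the technical point the paper's two-step construction is designed to sidestep. Your handling of the circle coordinates via a Euclidean $\delta(\epsilon)$ rather than the paper's metric $d_{p}$ is an inessential change of bookkeeping.
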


Note that, it is proved by Bagchi \cite{B1981} that, for pairwise non-equivalent Dirichlet characters $\chi_1,\, \ldots,\, \chi_r$, 
the set of all all convergent elements
\[
\left( \sum_{p > X} \frac{\chi_1(p) c(p)}{p^{s}}, \ldots, \sum_{p > X} \frac{\chi_r(p) c(p)}{p^{s}} \right),\, c(p) \in \mathbb{T}
\]
is dense in $H(D)^r$.
Hence, Theorem \ref{thm:main-chi} is a special case of Theorem \ref{thm:HU}.

Finally, we give one more corollary. 
Let $\phi (s)$ be an element of the Steuding class $\mathcal{S}$. 
The assumption of the type of the axiom $(S5)$ was firstly introduced by Laurin\u{c}ikas and Matsumoto \cite{LM2000}. 
This assumption implies that the set of all convergent series
\[
\sum_{p>X} \frac{a_{\phi} (p) c(p)}{p^s}, \quad c(p) \in \mathbb{T}
\]
is dense in $H \left( D_{\phi} \right)$, which is proved in \cite[Theorem 5.10]{S2007}.
Thus, we obtain the following corollary:

\begin{corollary}
Let $\phi (s)$ be an element of the Steuding class $\mathcal{S}$ and let $K$ be a compact subset of $D_\phi$. 
Suppose that $f$ is a non-vanishing continuous function on $K$ and analytic in the interior of $K$ and that $\theta_{p_{k_1}}, \, \ldots, \, \theta_{p_{k_N}}$ are real numbers.
Then, for any $\epsilon>0$, we have
\[
\liminf_{T \rightarrow \infty} \frac{1}{T} \meas\left\{ \tau \in [0, T] \mid \max_{s \in K} \left| \phi(s + i \tau) - f (s) \right| < \epsilon,\, \max_{1 \leq n \leq N} \left\| \tau \frac{\log p_{k_n}}{2 \pi} - \theta_{p_{k_n}} \right\| < \epsilon \right\}>0. 
\]
\end{corollary}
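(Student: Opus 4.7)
The plan is to derive this corollary by direct application of Corollary \ref{cor:main} in the single-function case. I would set $r = 1$, $\phi_1 = \phi$, $K_1 = K$, $f_1 = f$, and take the same prime set $\mathcal{P}_N = \{p_{k_1}, \ldots, p_{k_N}\}$ and shifts $\theta_{p_{k_n}}$. Every hypothesis of Corollary \ref{cor:main} is then immediate from the data, with the sole exception of the density hypothesis \eqref{eqn:DSC}, which still has to be verified from the fact that $\phi$ belongs to the Steuding class $\mathcal{S}$.

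The verification of \eqref{eqn:DSC} is the only non-bookkeeping step. Since $\phi \in \mathcal{S}$, axiom (S5) holds, and by \cite[Theorem 5.10]{S2007} the prime mean-square condition (S5) already implies that the set of all convergent series
\[
\sum_{p > X} \frac{a_\phi(p) c(p)}{p^s}, \qquad c(p) \in \mathbb{T},
\]
is dense in $H(D_\phi)$ for every $X > 0$. This is precisely the $r = 1$ specialization of the density condition \eqref{eqn:DSC}, so the full hypothesis of Theorem \ref{thm:main}, and hence of Corollary \ref{cor:main}, is satisfied.

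With \eqref{eqn:DSC} in hand, the conclusion of Corollary \ref{cor:main} in the case $r=1$ is word-for-word the asserted positive lower density for
\[
\max_{s \in K} |\phi(s + i\tau) - f(s)| < \epsilon, \qquad \max_{1 \leq n \leq N} \left\| \tau \tfrac{\log p_{k_n}}{2\pi} - \theta_{p_{k_n}} \right\| < \epsilon.
\]
The main difficulty therefore lies entirely upstream, in the proof of Theorem \ref{thm:main} and its deduction of Corollary \ref{cor:main}; once those are available, this final corollary drops out as a clean specialization plus an invocation of \cite[Theorem 5.10]{S2007}. I do not anticipate any obstacle in the step carried out here.
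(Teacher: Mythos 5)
Your proof is correct and is essentially identical to the paper's own argument: the paper also obtains this corollary by noting that axiom (S5) implies the density condition \eqref{eqn:DSC} in the case $r=1$ via \cite[Theorem 5.10]{S2007}, and then invoking Corollary \ref{cor:main}. (One small caveat inherited from the paper's wording: Corollary \ref{cor:main} requires $K$ to have connected complement, which is tacitly assumed here as well.)
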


\section{Preliminaries}

Fix $\phi \in \widetilde{\mathcal{S}}$. 
We define a metric $d_\phi$ on $H(D_{\phi})$ as follows: 
We take a sequence of compact subsets $\left\{ K_{\ell,\, \phi} \right\}_{\ell = 1}^\infty$ satisfying
\begin{itemize}

\item $\displaystyle D_{\phi} = \bigcup_{\ell = 1}^\infty K_{\ell,\, \phi}$, 

\item $K_{\ell,\, \phi} \subset K_{\ell + 1,\, \phi}^{\circ}$ for any $\ell \in \mathbb{N}$, 
where $A^\circ$ denotes the interior of a set $A$, and

\item for any compact subset $K$ of $\displaystyle D_{\phi}$, 
there exists $\ell \in \mathbb{N}$ such that $K \subset K_{\ell, \phi}$.

\end{itemize}

We define the metrics (semi-norms) $\left\{d_{\ell, \phi}\right\}_{\ell = 1}^\infty$ on $H(D_\phi)$ by 
\[
d_{\ell, \phi} (f, g) = \sup_{s \in K_{\ell, \phi}}\left| f(s) - g(s) \right|
\]
for $f, g \in H(D_\phi)$.
For the collection of the metrics $\left\{ d_{\ell, \phi} \right \}_{\ell = 1}^\infty$, 
we define the metric $d_{\phi}$ on $H(D_\phi)$ by
\[
d_{\phi} (f, g)
= \sum_{\ell = 1}^\infty \frac{ d_{\ell, \phi} (f, g) \land 1  }{2^{\ell}}
\]
for $f, g \in H(D_\phi)$,
which induces the desired topology on $H(D_\phi)$.
Here $a \land b $ denotes the minimum of real numbers $a$ and $b$.

Next, we put $\mathbb{T}_p = \mathbb{T}$ for prime numbers and see the set $\mathbb{T}_p$ as a metric space as follows. 
Recall that $\mathbb{R}/\mathbb{Z}$ is a metric space with respect to the metric $d_{\mathbb{R}/\mathbb{Z}}$ defined by 
\[
d_{\mathbb{R}/\mathbb{Z}} \left( \theta_1 + \mathbb{Z},\, \theta_2 + \mathbb{Z} \right)
= \| \theta_1 - \theta_2 \|
\]
for $\theta_1 + \mathbb{Z},\, \theta_2 + \mathbb{Z} \in \mathbb{R}/\mathbb{Z}$. 
It is known that the mapping $T_p:\mathbb{R}/\mathbb{Z} \ni \theta_p + \mathbb{Z} \mapsto \exp(2 \pi i \theta_p) \in \mathbb{T}_p $ is homeomorphic. 
Hence we can see $\mathbb{T}_p$ as a metric space, whose metric $d_p$ is defined by
\[
d_p(x_p, y_p) 
= d_{\mathbb{R}/\mathbb{Z}} \left( T_p^{-1}(x_p), T_p^{-1}(y_p) \right)
\]
for $x_p,\, y_p \in \mathbb{T}_p$.

Next, let us fix $\phi_1, \ldots, \phi_r \in \widetilde{\mathcal{S}}$ and let $\mathcal{P}_N = \left\{ p_{k_1}, \, \ldots,\, p_{k_N} \right\}$ be a set of $N$ mutually distinct prime numbers.
Then we can define the metric $d$ on $\prod_{j = 1}^r H(D_{\phi_j}) \times \mathbb{T}^{\mathcal{P}_N}$ by
\[
d \left( \left( \bm{f}, \bm{x} \right), \left(\bm{g}, \bm{y} \right) \right)
= \sum_{j = 1}^r d_{\phi_j} \left( f_j,\, g_j \right)\
+ \sum_{n = 1}^N d_{p_{k_n}} (x_{p_{k_n}} ,\, y_{p_{k_n}} )
\]
for any $(\bm{f}, \bm{x}) = \left(  (f_j)_{j = 1}^r, ( x_{p_{k_n}} )_{n = 1}^N \right),\, (\bm{g}, \bm{y}) = \left(  (g_j)_{j = 1}^r, ( y_{p_{k_n}} )_{n = 1}^N \right) \in \prod_{j = 1}^{r} H(D_{\phi_j}) \times \mathbb{T}^{\mathcal{P}_N}$.

At the end of this section, 
we give some notations. 
For a probability space $( M, \mathcal{M}, \mathbb{P} )$ and a random variable $X$ that values a measurable space $(S, \mathcal{S})$, we use the notation $\mathbb{P} \left( X \in A \right)
= \mathbb{P} \left( \left\{ \omega \in M \mid X(\omega) \in A \right\} \right)$
for $A \in \mathcal{S}$, 
and the expectation of a complex-valued random variable $Y$ is denoted by $\mathbb{E}^{\mathbb{P}} \left[ Y \right]$.

\section{Proof of Theorem \ref{thm:main} \textrm{(i)}}

To use the smoothing technique in analytic number theory, 
let us fix a real-valued smooth function $\lambda(x)$ on $[0, \infty)$ with compact support satisfying $\lambda(x)=1$ on $[0,1]$ and $0 \leq \lambda(x) \leq 1$ for all $ x \in [0, \infty)$. 
Let $\widehat{\lambda}(s)$ denote the Mellin transform of $\lambda(x)$, that is, 
\[
\widehat{\lambda}(s)
=  \int_{0}^{\infty} \lambda(x) x^{s - 1} dx
\]
for $\Re(s)> 0$. 
We recall the basic properties of the Mellin transform of $\lambda(x)$.

\begin{lemma}\label{lem:MeTr}
We have the following:
\begin{enumerate}[label=\rm{(\roman*)}]

\item The Mellin transform $\widehat{\lambda}(s)$ can be meromorphically continued to $\Re(s) > -1$ and has only one simple pole at $s = 0$ with residue $\lambda(0) = 1$.

\item Let $-1 < A < B$ be real numbers and let $N$ be a positive integer. 
Then there exists $C(A, B; N)>0$ such that
\[
\left| \widehat{\lambda}(s) \right|
\leq C(A, B; N) \left( 1 + | t | \right)^{-N}
\]
for $s = \sigma + it$ with $A \leq \sigma \leq B$.

\item For any $c > 0$ and $x>0$, the Mellin inversion formula
\[
\lambda(x) = \frac{1}{2 \pi i} \int_{c - i \infty}^{c + i \infty} \widehat{\lambda}(s) x^{-s} ds
\]
holds. 
\end{enumerate}
\end{lemma}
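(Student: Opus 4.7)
The plan is to treat the three parts of Lemma \ref{lem:MeTr} separately using a single unifying device, namely iterated integration by parts. The smoothness and compact support of $\lambda$ convert directly into (i) analytic continuation past $s=0$, (ii) rapid decay along vertical lines, and (iii) the legitimacy of Fourier/Mellin inversion. No estimate goes deeper than integration by parts plus standard Fourier analysis.

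For part (i), I would integrate by parts once in the definition of $\widehat{\lambda}(s)$. Since $\lambda$ has compact support the boundary term at $\infty$ vanishes, and for $\Re(s)>0$ the boundary term at $0$ also vanishes, giving
\[
\widehat{\lambda}(s) = -\frac{1}{s}\int_0^\infty \lambda'(x)\, x^s\, dx.
\]
The integral on the right is absolutely convergent for $\Re(s)>-1$ because $\lambda'$ is bounded with compact support and $x^s$ is integrable near $0$ in that range. This identity therefore provides the meromorphic continuation to $\Re(s)>-1$ with a unique simple pole at $s=0$; the residue is $-\int_0^\infty \lambda'(x)\,dx = \lambda(0)=1$ by the fundamental theorem of calculus.

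For part (ii), I would iterate the same procedure $N+1$ times (permitted since $\lambda\in C_c^\infty([0,\infty))$) to obtain
\[
\widehat{\lambda}(s) = \frac{(-1)^{N+1}}{s(s+1)\cdots(s+N)} \int_0^\infty \lambda^{(N+1)}(x)\, x^{s+N}\, dx.
\]
For $s=\sigma+it$ with $A\leq\sigma\leq B$ and $A>-1$, the integral is bounded uniformly in $t$ (depending on $\sup|\lambda^{(N+1)}|$ and $\supp\lambda$), while the denominator is bounded below by $c(A,B,N)(1+|t|)^{N+1}$. Adjusting the index yields the desired bound $|\widehat{\lambda}(s)|\ll_{A,B,N}(1+|t|)^{-N}$.

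For part (iii), I would reduce Mellin inversion to Fourier inversion via the substitution $x=e^u$. Setting $g(u)=\lambda(e^u)e^{cu}$, a direct computation gives
\[
\widehat{\lambda}(c+it) = \int_{-\infty}^\infty g(u)\, e^{itu}\, du,
\]
so $\widehat{\lambda}(c+i\,\cdot\,)$ is (up to convention) the Fourier transform of $g$. Because $\lambda$ is smooth with compact support, $g$ is a Schwartz function, so Fourier inversion applies and, after rewriting in the variable $x=e^u$, is exactly the claimed Mellin inversion formula; the absolute convergence of the resulting contour integral follows from part (ii). The main technical point to monitor throughout is that each interchange of integrals and each invocation of inversion is justified by the rapid decay from (ii) together with the smoothness and compact support of $\lambda$; beyond these routine verifications I do not anticipate any substantive obstacle.
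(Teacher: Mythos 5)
The paper offers no proof of Lemma \ref{lem:MeTr} --- it simply cites \cite[Appendix A]{K2021} --- so your proposal supplies the missing argument rather than competing with one, and the argument you give is the standard direct one. It is essentially correct: a single integration by parts (boundary terms vanish because $\lambda$ has compact support and $\Re(s)>0$, and indeed because $\lambda\equiv 1$ near $0$ so that $\lambda'$ is supported away from the origin) gives the continuation and the residue $-\int_0^\infty\lambda'(x)\,dx=\lambda(0)=1$; iterating gives the vertical decay in (ii); and the substitution $x=e^u$ turns $\widehat{\lambda}(c+it)$ into the Fourier transform of the Schwartz function $g(u)=\lambda(e^u)e^{cu}$ (exponential decay in both directions for $c>0$ since $\lambda$ is $1$ near $0$ and $0$ near $\infty$), so Fourier inversion yields (iii). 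One point to tighten in (ii): your claim that $|s(s+1)\cdots(s+N)|\gg_{A,B,N}(1+|t|)^{N+1}$ on the strip $A\le\sigma\le B$ fails when $0\in[A,B]$, since the $k=0$ factor vanishes at $s=0$, where $\widehat{\lambda}$ genuinely has a pole. This is really an imprecision in the statement of (ii) as quoted (no such bound can hold across the pole); it is harmless in this paper because the lemma is only ever invoked on vertical lines with $\Re(s)\neq 0$, but your proof should either assume $0\notin[A,B]$ or state the estimate for $s$ staying away from $s=0$. With that caveat made explicit, the proposal is complete and matches the intended content.
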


\begin{proof}
The proof can be found in \cite[Appendix A]{K2021}.
\end{proof}

For $X \geq 2$, we define the function $\phi_X(s)$ by
\[
\phi_X(s) 
= \sum_{n = 1}^{\infty} \frac{a_\phi(n) \lambda(n/X) }{n^s}. 
\]

We use the following lemma, which is essentially deduced from the Carlson's theorem (see e.g. \cite[\S\,9.51]{T1986}).

\begin{lemma}\label{lem:UME}
Let $\phi(s)$ be an element of $\widetilde{\mathcal{S}}$.
Suppose that $\sigma_1$ and $\sigma_2$ satisfy $\sigma_\phi < \sigma_1 < \sigma_2 < 1$. 
Then we have
\[
\frac{1}{2T} \int_{- T }^T \left| \phi(\sigma + it) \right|^2 dt 
\sim \sum_{n = 1}^{\infty} \frac{ | a_{\phi}(n) |^2 }{n^{2 \sigma}}
\]
uniformly for $\sigma_1 \leq \sigma \leq \sigma_2$. 
\end{lemma}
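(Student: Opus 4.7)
\textbf{Proof plan for Lemma \ref{lem:UME}.} Write
\[
F_T(\sigma) = \frac{1}{2T}\int_{-T}^T |\phi(\sigma+it)|^2\,dt \AND G(\sigma) = \sum_{n=1}^\infty \frac{|a_\phi(n)|^2}{n^{2\sigma}}.
\]
Pointwise convergence $F_T(\sigma)\to G(\sigma)$ at every fixed $\sigma>\sigma_\phi$ is built into the definition of $\sigma_\phi$; the task is to promote this to uniform convergence on $[\sigma_1,\sigma_2]$. The limit $G$ is continuous (in fact real-analytic) on $(\sigma_\phi,1)$ because the Ramanujan bound (S1) yields absolute convergence of the defining series whenever $2\sigma>1$, and $\sigma_\phi\ge 1/2$.

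The plan is to apply an Arzel\`a--Ascoli style argument: I will show that the family $\{F_T\}_{T\ge T_0}$ is equicontinuous on $[\sigma_1,\sigma_2]$, and combine this with pointwise convergence to a continuous limit. First I pick $\delta>0$ small enough that $[\sigma_1-\delta,\sigma_2+\delta]\subset(\sigma_\phi,1)$ and, in particular, the strip $\{\sigma_1-\delta<\Re s<\sigma_2+\delta\}$ lies in the region of holomorphy of $\phi$ granted by (S2) (it is bounded away from the pole at $s=1$). Using (S3) together with pointwise convergence of $F_T$ at the two points $\sigma_1-\delta$ and $\sigma_2+\delta$, I expect to obtain a uniform bound
\[
\sup_{T\ge T_0}\;\sup_{\sigma\in[\sigma_1-\delta,\sigma_2+\delta]} F_T(\sigma) < \infty,
\]
either directly from (S3) (which gives polynomial bounds on $\phi$ in the strip, and hence crude but uniform $L^2$-bounds) or, more efficiently, from the log-convexity of $\sigma\mapsto F_T(\sigma)$ via Hadamard's three-lines theorem applied to a Mellin/contour representation of $\phi^2$.

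Once this uniform $L^2$-bound is in hand, I apply Cauchy's integral formula on a small circle around $\sigma+it$ contained in the vertical strip $\sigma_1-\delta<\Re s<\sigma_2+\delta$ to estimate $|\phi'(\sigma+it)|^2$ by a mean of $|\phi|^2$ along that circle; integrating in $t$ and swapping order of integration gives
\[
\frac{1}{2T}\int_{-T}^T |\phi'(\sigma+it)|^2\,dt = O(1)
\]
uniformly for $\sigma\in[\sigma_1,\sigma_2]$ and $T\ge T_0$. Cauchy--Schwarz applied to $F_T'(\sigma)=-\frac{1}{T}\int_{-T}^T \Re\bigl(\phi'\overline{\phi}\bigr)(\sigma+it)\,dt$ then yields $|F_T'(\sigma)|=O(1)$ uniformly, so $\{F_T\}$ is uniformly Lipschitz, hence equicontinuous, on $[\sigma_1,\sigma_2]$. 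Equicontinuity plus pointwise convergence to the continuous $G$ forces uniform convergence (via a standard $\varepsilon/3$ argument, or Arzel\`a--Ascoli plus uniqueness of the limit).

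The main obstacle is the uniform $L^2$-bound on the slightly enlarged interval $[\sigma_1-\delta,\sigma_2+\delta]$: the hypothesis on $\sigma_\phi$ gives the asymptotic only pointwise, so one must interpolate it in the $\sigma$ variable. This is precisely the content that Carlson's mean-value theorem (\cite[\S\,9.51]{T1986}) supplies from (S2) and (S3), and it is the step where the axioms of $\widetilde{\mathcal{S}}$ are genuinely used; the remaining steps are soft functional-analytic arguments.
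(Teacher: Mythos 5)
Your high-level framework — equicontinuity of $\{F_T\}$ on $[\sigma_1,\sigma_2]$ plus pointwise convergence to the continuous limit $G$ implies uniform convergence — is sound, and the Cauchy-integral-formula-plus-Cauchy--Schwarz route to a uniform Lipschitz bound on $F_T$ is correct modulo the ingredient you yourself single out as the main obstacle. The problem is that obstacle is not resolved, and your two proposed ways around it do not work. First, (S3) gives only the pointwise polynomial bound $\phi(\sigma+it)\ll|t|^C$, which when squared and averaged over $[-T,T]$ yields $F_T(\sigma)\ll T^{2C}$, a bound that \emph{grows} with $T$, not a uniform one; so (S3) alone cannot supply $\sup_{T\ge T_0}\sup_{\sigma}F_T(\sigma)<\infty$. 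Second, Hardy/Gabriel log-convexity of the map $\sigma\mapsto\int|\phi(\sigma+it)|^2\,dt$ holds for the \emph{full} integral $\int_{-\infty}^{\infty}$ under suitable decay hypotheses, but for the truncated integral $\int_{-T}^{T}$ the boundary effects break the convexity inequality; rescuing it requires either a smoothing of the sharp cutoff at $|t|=T$ or a Ramachandra-style argument — in other words, real analytic work that is not in the proposal.

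More seriously, deferring the uniform $L^2$-bound to ``Carlson's mean-value theorem'' is circular in spirit: the classical Carlson theorem of Titchmarsh \S\,9.51 yields a mean-value asymptotic at each \emph{fixed} $\sigma$, exactly the pointwise statement that is already built into the definition of $\sigma_\phi$. The content of the lemma is precisely the \emph{uniformity} in $\sigma$, and that is the part the paper singles out as not having a complete proof in the literature. Your proposal therefore reduces the uniform statement to a uniform $L^2$-bound that is essentially equivalent to the lemma itself, and then cites a source that supplies only the pointwise version. By contrast, the paper's proof actually produces the uniform bound: it approximates $\phi$ by the smoothed Dirichlet polynomial $\phi_X$, applies the Montgomery--Vaughan form of Hilbert's inequality to get a uniform-in-$\sigma$ mean square for $\phi_X$, and then bounds $\frac{1}{2T}\int_{-T}^T|\phi-\phi_X|^2\,dt$ via a Mellin/Perron contour shift down to $\Re(\xi)=-\Re(z)+\alpha$, where it invokes the pointwise mean value only at the single abscissa $\alpha<\sigma_1$. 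That contour-shift step, together with a residue estimate for the possible pole at $s=1$, is exactly the quantitative interpolation in $\sigma$ that your plan lacks. If you want to keep the equicontinuity framework, you would still need to carry out an argument of this type (or an equivalent Ramachandra-type convexity estimate with explicit error terms) to obtain the uniform bound on $[\sigma_1-\delta,\sigma_2+\delta]$; once that is done, your functional-analytic finish is fine.
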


Lemma \ref{lem:UME} is only stated as a fact in Steuding's textbook \cite{S2007}. 
In this paper, we provide a full proof. 

\begin{proof}
Let us fix $\sigma_1$ and $\sigma_2$ satisfying $\sigma_\phi < \sigma_1 < \sigma_2 < 1$.
First, we show
\begin{equation}\label{eqn:supES}
\frac{1}{2T} \int_{-T}^{T} \left| \phi_X( \sigma + it ) \right|^2 \, dt
= \sum_{n =1}^{\infty} \frac{ \left| a_\phi (n) \right|^2 }{n^{2\sigma}} + E_1\left( \sigma, X \right) + O \left( X T^{-1} \right)
\end{equation}
uniformly for $\sigma_1 \leq \sigma \leq \sigma_2$, 
where $E_1\left( \sigma, X \right)$ depends only on $\sigma$ and $X$, and is estimated as
\[
\lim_{X \rightarrow \infty} \sup_{\sigma_1 \leq \sigma \leq \sigma_2} E_1\left( \sigma, X \right) = 0.
\]
Using Hilbert's inequality \cite{MV1974}, 
we have
\begin{align*}
\frac{1}{2T} \int_{-T}^{T} \left| \phi_X( \sigma + it ) \right|^2 \, dt 
&= \frac{1}{2T} \int_{-T}^{T} \left| \sum_{n = 1}^{\infty} \frac{ a_\phi (n) \lambda (n/X)}{n^{\sigma + it}} \right|^2 \, dt \\
&= \sum_{n = 1}^{\infty} \frac{ \left| a_\phi (n) \right|^2 \lambda(n/X)^2 }{n^{2 \sigma}}
+O \left( \frac{1}{T} \sum_{n = 1}^{\infty} n \cdot \frac{ \left| a_\phi (n) \right|^2 \lambda(n/X)^2 }{n^{2 \sigma}} \right) \\
&= \sum_{n = 1}^{\infty} \frac{ \left| a_\phi (n) \right|^2}{n^{2 \sigma}} + E_1(\sigma, X) 
+O \left( \frac{1}{T} \sum_{n = 1}^{\infty} n \cdot \frac{ \left| a_\phi (n) \right|^2 \lambda(n/X)^2 }{n^{2 \sigma}} \right),
\end{align*}
where
\[
E_1(\sigma, X)
=  \sum_{n = 1}^{\infty} \frac{ \left| a_\phi (n) \right|^2 (1 - \lambda(n/X)^2) }{n^{2 \sigma}}
\]
Note that, letting a constant positive $C_\lambda$ satisfy $\supp(\lambda) \subset [0, C_\lambda]$, 
we have $\lambda(n/X) =0 $ for $n > C_\lambda X$.
Hence we have
\[
\frac{1}{T} \sum_{n = 1}^{\infty} n \cdot \frac{ \left| a_\phi (n) \right|^2 \lambda(n/X)^2 }{n^{2 \sigma}} 
\leq \frac{C_\lambda X}{T} \sum_{n = 1}^{\infty} \frac{ \left| a_\phi (n) \right|^2 \lambda(n/X)^2 }{n^{2 \sigma}} 
\leq \frac{X}{T} \sum_{n = 1}^{\infty} \frac{ \left| a_\phi (n) \right|^2 }{n^{2 \sigma_1}} 
\ll \frac{X}{T},
\]
where we can find that the series $\sum_{n = 1}^{\infty} \left| a_\phi (n) \right|^2 / n^{2 \sigma_1}$ converges by the axiom (i) in $\widetilde{\mathcal{S}}$. 
Since $\lambda(n/X) = 1$ for $n \leq X$ by the definition of $\lambda$, 
we have
\[
\sup_{\sigma_1 \leq \sigma \leq \sigma_2} E_1(\sigma, X) 
\leq \sum_{n > X} \frac{ \left| a_\phi (n) \right|^2 }{n^{2 \sigma_1}} \rightarrow 0
\]
as $X \rightarrow \infty$ by the convergence of the series $\sum_{n = 1}^{\infty} \left| a_\phi (n) \right|^2 / n^{2 \sigma_1}$.
Therefore we arrive at the estimate \eqref{eqn:supES}. 
In what follows, we take $X = T^{1/2}$.

By the definition of $\sigma_\phi$, 
we can take $\alpha$ such that $\sigma_\phi \leq \alpha < \sigma_1$ and 
\begin{equation}\label{eqn:MTC}
\frac{1}{2T} \int_{- T}^{T} \left| \phi( \alpha + it ) \right|^2 dt \sim \sum_{n = 1}^\infty \frac{ |a_{\phi}(n)|^2 }{n^{2 \alpha}}
\end{equation}
as $T \rightarrow \infty$. 

For $\sigma_1 \leq \Re(z) \leq \sigma_2$ and $c>1$, 
we have
\[
\phi_{X}(z)
= \frac{1}{2 \pi i} \sum_{n = 1}^\infty \int_{c - i \infty}^{c + i \infty} \frac{a_\phi (n) \widehat{\lambda} (\xi) }{n^{z}} \left( \frac{n}{X} \right)^{ - \xi } d \xi
= \frac{1}{2 \pi i} \int_{c - i \infty}^{c + i \infty} \phi(z + \xi) \widehat{\lambda} (\xi) X^{\xi} d \xi
\]
by the Mellin inversion formula for $\lambda(x)$, as is stated in Lemma \ref{lem:MeTr} (iii). 
Note that the interchange of the above sum and the integral is justified by Fubini's theorem and Lemma \ref{lem:MeTr} (ii).
We shift the path of integration to the line $\Re(\xi) = - \Re(z) + \alpha $ (see Figure \ref{fig:path1} below). 

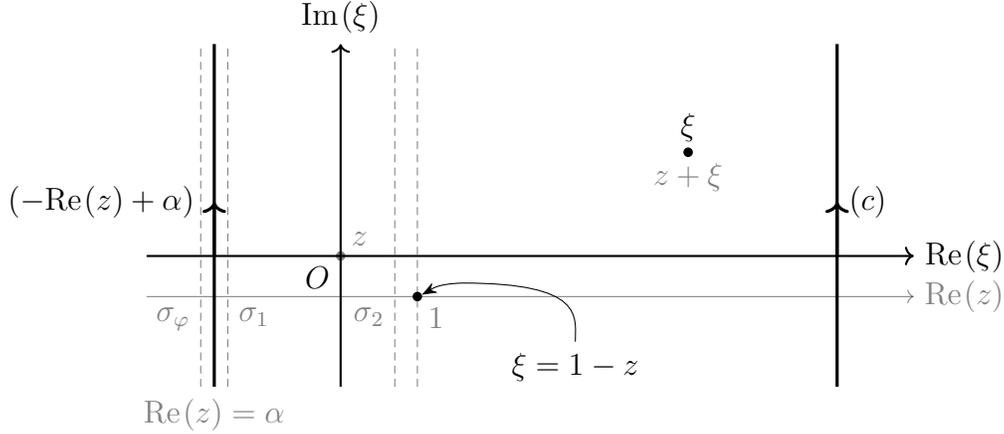
\begin{figure}[H]
\centering
\def\xs{5}
\begin{tikzpicture}[scale = 1.2, xscale = \xs]
\def\RX{2.1}
\def\LX{0.4}
\def\UY{2.8}
\def\DY{-1}
\def\sphi{0.52}
\def\sf{0.58} 
\def\ss{0.95} 
\draw[->, opacity = .5] (\LX, 0) -- (\RX, 0) node[right, opacity = .5]{$\Re(z)$};
\draw[densely dashed, opacity = .5] (\sphi, \DY) -- (\sphi, \UY);
\draw (0.52, 0) node[below left, opacity = .5]{$\sigma_\phi$};
\draw[densely dashed, opacity = .5] (\sf, \DY) -- (\sf, \UY);
\draw (\sf, 0) node[below right, opacity = .5]{$\sigma_1$};
\draw[densely dashed, opacity = .5] (\ss, \DY) -- (\ss, \UY);
\draw (\ss, 0) node[below left, opacity = .5]{$\sigma_2$};
\draw[densely dashed, opacity = .5] (1, \DY) -- (1, \UY);
\draw (1, 0) node[below right, opacity = .5]{$1$}; 
\def\zx{0.83}
\def\zy{0.45}
\begin{scope}[shift = {(\zx, \zy)}, xscale = 1/\xs]
\fill[opacity = .5] (0, 0) circle(1.5pt) node[above right, opacity = .5]{$z$};
\draw (0, 0) node[below left]{$O$}; 
\end{scope}
\draw[thick, ->] (\LX, \zy) -- (\RX, \zy) node[right]{$\Re(\xi)$}; 
\draw[thick, ->] (\zx, \DY) -- (\zx, \UY) node[above]{$\Im(\xi)$}; 
\draw[very thick] (\zx + 1.1, \DY) -- (\zx + 1.1, \UY); 
\draw[very thick] (\sphi/2 + \sf/2, \DY) node[below, opacity = .5]{$\Re(z) = \alpha$} -- (\sphi/2 + \sf/2, \UY); 
\draw[very thick, ->] (\zx + 1.1, \zy) -- (\zx + 1.1, \zy + 0.6) node[right]{$(c)$}; 
\draw[very thick, ->] (\sphi/2 + \sf/2, \zy) -- (\sphi/2 + \sf/2, \zy + 0.6) ;
\draw (\sphi/2 + \sf/2 - 0.02, \zy + 0.6) node[left]{$(- \Re(z) + \alpha)$}; 
\begin{scope}[shift = {(1.6, 1.6)}, xscale = 1/\xs]
\fill (0, 0) circle(1.5pt) node[above]{$\xi$};
\draw (0, 0) node[below, opacity = .5]{$z + \xi$};
\end{scope}
\begin{scope}[shift = {(1, 0)}, xscale = 1/\xs]
\fill (0, 0) circle(1.5pt);
\end{scope}
\draw[-{Stealth[length=2mm]}]
(1.35, -0.5) node[below]{$\xi = 1 - z$} to [out = 90, in = 0]
(1.1, 0.15) to [out = 190, in = 70]
(1.01, 0.02);
\end{tikzpicture}
\caption{The shift of the path of the integration}
\label{fig:path1}
\end{figure}

By Lemma \ref{lem:MeTr} (ii) and the axiom $(S3)$ of $\widetilde{\mathcal{S}}$, 
we can shift the above path of integration to deduce
\begin{equation}\label{eqn:supP1}
\phi(z) - \phi_{X}(z) 
= - \frac{1}{2 \pi i} \int_{- \Re(z) + \alpha - i \infty }^{ - \Re(z) + \alpha + i \infty} \phi(z + \xi) \widehat{\lambda}(\xi) X^{\xi} d \xi - E(z),
\end{equation}
where 
\begin{equation} \label{eqn:RE}
E(z)
= \begin{dcases}
\Res_{\xi = 1 - z} \left( \phi(z + \xi) \widehat{\lambda}(\xi) X^{\xi} \right) & \textrm{if $\phi(s)$ has a pole at $s=1$}, \\
0 & \textrm{otherwise}. 
\end{dcases}
\end{equation}

We estimate $E(z)$ in the case $\phi(s)$ has a pole of order $f$ at $s=1$. 
We may write
\begin{align*}
E(z)
&= \Res_{w = 1 } \left( \phi(w) \widehat{\lambda}(w - z) X^{w - z} \right) \\
&= \lim_{w \rightarrow 1} \frac{1}{(f - 1)!} \left( \frac{d}{dw} \right)^{f - 1} \left( (w - 1)^f \phi(w) \widehat{\lambda}(w - z) X^{w - z} \right) \\
&=: \lim_{w \rightarrow 1} \widetilde{E}(w).
\end{align*}
Let $\epsilon$ be a small positive real parameter. 
Then, by Cauchy's integral formula, we have
\[
\widetilde{E}(w) 
= \frac{1}{2 \pi i} \int_{\left| \eta - w \right| = \epsilon} \frac{ (\eta - 1)^f \phi(\eta) \widehat{\lambda}( \eta - z ) X^{\eta - z} }{ \left( \eta - w \right)^{ f } } d \eta
\]
for $|w| < \epsilon /2 $, where the path of integration is oriented counterclockwise.
Since $\phi(\eta)$ has a pole of order $f$ at $\eta = 1$, $(\eta - 1)^f \phi(\eta)$ is bounded around $\eta = 1$.
Hence we have
\[
\widetilde{E}(w) 
\ll \epsilon^{1-f} X^{1 - \Re(z) + \frac{3}{2}\epsilon} \max_{ \eta \mid \left| \eta - w \right| = \epsilon}\left| \widehat{\lambda}( \eta - z ) \right|.
\]
Therefore, taking $\epsilon$ small enough, we have the bound
\begin{equation}\label{eqn:E-res}
E(z)
\ll \frac{X}{ \left( | \Im (z) | + 2 \right)^2 }. 
\end{equation}
uniformly for $\sigma _1 \leq \Re(z) \leq \sigma_2$ by Lemma \ref{lem:MeTr} (ii).

Put $\delta = \sigma_1 - \alpha$. 
Then we have
\begin{align*}
& \frac{1}{2T} \int_{-T}^T \left| \phi(\sigma + it) - \phi_X(\sigma + it) \right|^2 \,dt \\
\ll&\,  X^{- \delta} \int_{- \infty}^\infty \left| \widehat{\lambda}( - \sigma + \alpha + i v ) \right| \, dv \left( \frac{1}{2T} \int_{-T}^T \left| \phi(\alpha + i(t + v)) \right|^2 dt \right) 
+ \frac{1}{2T} \int_{-T}^T \left| E(\sigma + i t) \right|^2 \, dt \\
=&\, S_1 + S_2.
\end{align*}

We estimate $S_1$. 
Now, since the estimate
\[
\frac{1}{2T} \int_{-T}^T \left| \phi(\alpha + i(t + v)) \right|^2 dt
\leq \frac{1}{2T} \int_{-T - |v|}^{T + |v|} \left| \phi(\alpha + i t) \right|^2 dt
\ll \frac{1}{T} \left( T + |v| \right)
\ll 1 + |v|
\]
holds by the asymptotic formula \eqref{eqn:MTC}, 
we have
\[
S_1 
\ll X^{- \delta} \int_{- \infty}^\infty \left| \widehat{\lambda}( - \sigma + \alpha + i v ) \right| \left( 1 + |v| \right) \, dv 
\ll X^{- \delta}
\]
uniformly for $\sigma_1 \leq \sigma \leq \sigma_2$ by Lemma \ref{lem:MeTr} (ii). 

As for the estimate $S_2$, we have $S_2 \ll X T^{-1}$ uniformly for $\sigma_1 \leq \sigma \leq \sigma_2$ by the estimate \eqref{eqn:E-res}. 
Hence we obtain
\[
\frac{1}{2T} \int_{-T}^T \left| \phi(\sigma + it) - \phi_X(\sigma + it) \right|^2 \,dt
\ll X^{- \delta} + X T^{-1}.
\]
Now, using Minkowski's inequality, we have
\begin{align*}
&\left| \left( \frac{1}{2T} \int_{-T}^{T} \left| \phi(\sigma + it) \right|^2\, dt \right)^{1/2} - \left( \frac{1}{2T} \int_{-T}^{T} \left| \phi_X(\sigma + it) \right|^2\, dt \right)^{1/2} \right| \\
\leq&\, \left( \frac{1}{2T} \int_{-T}^T \left| \phi(\sigma + it) - \phi_X(\sigma + it) \right|^2 \,dt \right)^{1/2}
\ll \left( X^{- \delta} + XT^{-1} \right)^{1/2}.
\end{align*}
We also have
\begin{align*}
& \left( \frac{1}{2T} \int_{-T}^{T} \left| \phi(\sigma + it) \right|^2\, dt \right)^{1/2} \\
\leq& \left( \frac{1}{2T} \int_{-T}^T \left| \phi(\sigma + it) - \phi_X(\sigma + it) \right|^2 \,dt \right)^{1/2}
+ \left( \frac{1}{2T} \int_{-T}^{T} \left| \phi_X(\sigma + it) \right|^2\, dt \right)^{1/2} 
\ll 1.
\end{align*}
Hence, using the inequality 
\[
| x - y | \leq \left| x^{1/2} + y^{1/2} \right| \left| x^{1/2} - y^{1/2} \right|
\]
for $x,\, y \geq 0$ and the estimate \eqref{eqn:supES}, 
we have
\begin{align*}
&\left| \frac{1}{2T} \int_{-T}^{T} \left| \phi(\sigma + it) \right|^2\, dt - \frac{1}{2T} \int_{-T}^{T} \left| \phi_X(\sigma + it) \right|^2\, dt \right| \\
\ll&\, \left| \left( \frac{1}{2T} \int_{-T}^{T} \left| \phi(\sigma + it) \right|^2\, dt \right)^{1/2} - \left( \frac{1}{2T} \int_{-T}^{T} \left| \phi_X(\sigma + it) \right|^2\, dt \right)^{1/2} \right| \\
\ll&\, \left( X^{- \delta} + XT^{-1} \right)^{1/2}.
\end{align*}
Therefore, after short calculations, we obtain
\[
\frac{1}{2T} \int_{- T }^T \left| \phi(\sigma + it) \right|^2 dt 
= \sum_{n = 1}^{\infty} \frac{ | a_{\phi}(n) |^2 }{n^{2 \sigma}} \left( 1 + E \left( \sigma, T \right) \right), 
\]
where the estimate
\[
\lim_{T \rightarrow \infty} \sup_{\sigma_1 \leq \sigma \leq \sigma_2} | E \left( \sigma, T \right) | = 0
\]
holds. 
This completes the proof.
\end{proof}

\begin{proposition}\label{prop:MT}
Let $\phi(s)$ be an element of $\widetilde{\mathcal{S}}$ and $C$ be a compact subset of $D_{\phi}$. 
Then we have
\[
\lim_{X \rightarrow \infty} \limsup_{T \rightarrow \infty} \frac{1}{T} \int_{0}^{T} \max_{s \in C} \left| \phi(s + i \tau) - \phi_X (s + i \tau) \right| d \tau = 0
\]
\end{proposition}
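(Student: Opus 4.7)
The plan is to reduce the $L^{1}$ time-average of the maximum over $C$ to a mean-square estimate on a vertical line (essentially already contained in the proof of Lemma~\ref{lem:UME}) and then pass back from $L^{2}$ to $L^{1}$ by Cauchy--Schwarz.

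First, since $C \subset D_\phi$ is compact and $D_\phi$ is open, I would choose $r > 0$ and real numbers $\sigma_\phi < \sigma_1 < \sigma_2 < 1$ such that the thickened set $K := \{z + w : z \in C,\, |w| \leq r\}$ is a compact subset of the strip $\{\sigma_1 \leq \Re(s) \leq \sigma_2\}$. Put $g_\tau(s) := \phi(s + i\tau) - \phi_X(s + i\tau)$, which is holomorphic on $D_\phi$. Since $|g_\tau|^{2}$ is subharmonic, the area mean-value inequality yields
\[
\max_{s \in C} |g_\tau(s)|^{2} \leq \frac{1}{\pi r^{2}}\int_K |g_\tau(z)|^{2}\, dA(z),
\]
where $dA$ denotes planar Lebesgue measure.

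Next, I would integrate both sides in $\tau \in [0,T]$, swap the integrals by Fubini, and perform the shift $u = \tau + \Im z$ in the inner integral. Setting $M := \max_{z \in K}|\Im z|$, the inner integral is, for $T \geq M$, bounded by $T^{-1}\int_{-2T}^{2T} |\phi(\sigma + iu) - \phi_X(\sigma + iu)|^{2}\, du$. During the proof of Lemma~\ref{lem:UME} it was established that
\[
\frac{1}{2T}\int_{-T}^{T} |\phi(\sigma + it) - \phi_X(\sigma + it)|^{2}\, dt \ll X^{-\delta} + X T^{-1}
\]
uniformly for $\sigma \in [\sigma_1, \sigma_2]$ (with $\delta := \sigma_1 - \alpha > 0$); applied with $T$ replaced by $2T$ this delivers exactly what is needed. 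Letting $T \to \infty$ and then $X \to \infty$ kills both error terms, hence
\[
\lim_{X\to\infty}\limsup_{T\to\infty}\frac{1}{T}\int_{0}^{T}\max_{s \in C}|g_\tau(s)|^{2}\,d\tau = 0.
\]

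Finally, Cauchy--Schwarz upgrades this $L^{2}$ bound to the desired $L^{1}$ statement via
\[
\frac{1}{T}\int_0^T \max_{s \in C} |g_\tau(s)|\, d\tau \leq \left(\frac{1}{T}\int_0^T \max_{s \in C} |g_\tau(s)|^{2}\, d\tau\right)^{1/2}.
\]
The only mildly delicate step is the opening subharmonicity move, which requires $K \subset D_\phi$; this is automatic from the compactness of $C$ and openness of $D_\phi$. Beyond that, the argument just invokes the mean-square bound distilled from the proof of Lemma~\ref{lem:UME}, so I do not anticipate any substantial obstacles.
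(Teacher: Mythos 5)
Your proof is correct, and it takes a genuinely different route from the paper's. The paper bounds $\max_{s\in C}|g_\tau(s)|$ (not its square) via Cauchy's integral formula over $\partial\mathcal R$ for a rectangle $\mathcal R\supset C$, and then, after passing to vertical-line $L^1$ averages $\tfrac1T\int_{-2T}^{2T}|g(\sigma+it)|\,dt$, it re-runs the entire Mellin-inversion contour-shift argument from scratch inside Proposition~\ref{prop:MT} (shifting to $\Re\xi=\sigma_L-\delta-\Re z$, splitting into $S_1\ll X^{-\delta}$ and $S_2\ll XT^{-1}$, and invoking the statement of Lemma~\ref{lem:UME} only to control $\tfrac1T\int|\phi(u+i(t+v))|\,dt$). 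You instead use the subharmonicity of $|g_\tau|^2$ to control $\max_{s\in C}$ by an area average over a compact thickening $K$, stay in $L^2$ throughout, and simply quote the intermediate inequality $\tfrac1{2T}\int_{-T}^{T}|\phi(\sigma+it)-\phi_X(\sigma+it)|^2\,dt\ll X^{-\delta}+XT^{-1}$ that was already displayed during the proof of Lemma~\ref{lem:UME}; Cauchy--Schwarz at the end returns the $L^1$ statement. This is more economical, as it avoids repeating the Mellin contour shift. Two small points worth making explicit in a write-up: (a) you must choose $r$ small enough that the thickening $K$ still lies in a closed substrip $\sigma_\phi<\sigma_1\le\Re s\le\sigma_2<1$, so the quoted bound applies uniformly on $K$; and (b) although Lemma~\ref{lem:UME}'s proof at one point sets $X=T^{1/2}$, the displayed bound $\ll X^{-\delta}+XT^{-1}$ is obtained before that specialization plays any role and is valid for independent $X$ and $T$, so your invocation of it is legitimate. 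With those observations, the argument is complete.
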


\begin{proof}
The following proof is almost the same as in \cite[Lemma 4.8 in p.~73]{S2007}.
We fix a rectangle $\mathcal{R} = [\sigma_L, \sigma_R] \times i [ - R, R ]$ satisfying
\[
\sigma_\phi < \sigma_L < \min_{s \in C} \Re(s) \leq \max_{s \in C} \Re(s) < \sigma_R < 1
\AND R > |\max_{s \in C} \Im(s)| \lor |\min_{s \in C} \Im(s)|,
\]
where $a \lor b$ stands for the maximum of real numbers $a$ and $b$ (see Figure \ref{fig:rec} below). 

\begin{figure}[H]\label{fig:rec} 
\centering
\def\xs{1}
\begin{tikzpicture}[scale = 16, xscale = \xs]
\def\LX{0.45}
\def\RX{1.05}
\def\UY{0.2}
\def\DY{-0.2}
\def\R{0.17}
\def\sphi{0.52}
\def\sf{0.58} 
\def\ss{0.95} 
\draw[->, thick] (\LX, 0) -- (\RX, 0) node[right]{$\sigma$}; 
\draw (0.5, 0) node[below left]{$1/2$}; 
\draw (1, 0) node[below right]{$1$}; 
\draw[densely dashed, thick] (0.5, \DY) -- (0.5, \UY);
\draw[densely dashed, thick] (1, \DY) -- (1, \UY);
\draw[densely dashed] (\sphi, \DY) -- (\sphi, \UY);
\draw (0.52, 0) node[below right]{$\sigma_\phi$};
\draw[densely dashed] (\sf, \DY) -- (\sf, \UY);
\draw (\sf, 0) node[below right]{$\sigma_L$};
\draw[densely dashed] (\ss, \DY) -- (\ss, \UY);
\draw (\ss, 0) node[below right]{$\sigma_R$};

\def\s{-0.05}
\draw[thick] 
(0.65, 0.05 + \s) to [out = 270, in = 180]
(0.72, -0.05 + \s) to [out = 0, in = 200]
(0.8, 0.04 + \s) to [out = 20, in = 270]
(0.9, 0.1 + \s) to [out = 90, in = 0]
(0.75, 0.2 + \s) to [out = 180, in = 90]
(0.65, 0.05 + \s);

\fill[black, opacity = .05]
(0.65, 0.05 + \s) to [out = 270, in = 180]
(0.72, -0.05 + \s) to [out = 0, in = 200]
(0.8, 0.04 + \s) to [out = 20, in = 270]
(0.9, 0.1 + \s) to [out = 90, in = 0]
(0.75, 0.2 + \s) to [out = 180, in = 90]
(0.65, 0.05 + \s);

\node at (0.75, 0.1) {$C$};

\draw[thick] (\sf, -\R) -- (\ss, -\R) -- (\ss, \R) -- (\sf, \R) -- cycle; 
\draw (\ss, \R) node[below left]{$\mathcal{R}$};

\end{tikzpicture}
\caption{The rectangle $\mathcal{R}$}
\label{fig:rec}
\end{figure}
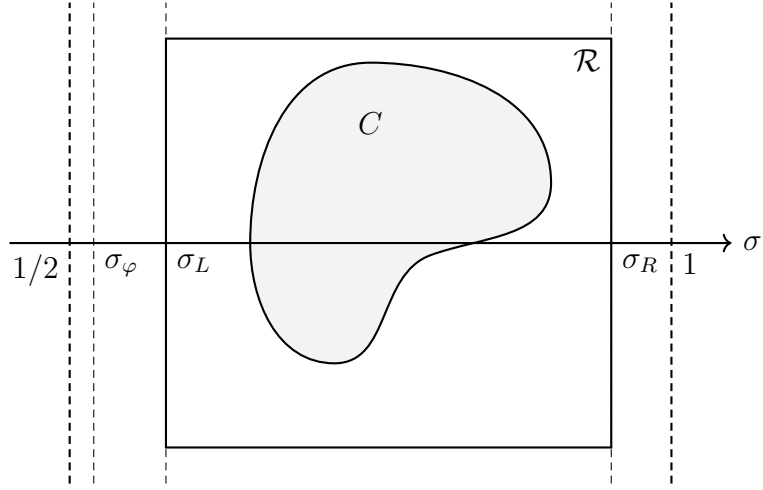

By the Mellin inversion formula for $\lambda(x)$, stated in Lemma \ref{lem:MeTr} (iii), 
we have
\[
\phi_{Y}(z)
= \frac{1}{2 \pi i} \sum_{n = 1}^\infty \int_{c - i \infty}^{c + i \infty} \frac{a_\phi (n) \widehat{\lambda} (\xi) }{n^{z}} \left( \frac{n}{X} \right)^{ - \xi } d \xi
= \frac{1}{2 \pi i} \int_{c - i \infty}^{c + i \infty} \phi(z + \xi) \widehat{\lambda} (\xi) Y^{\xi} d \xi
\]
for $\Re(z) \geq \sigma_L$ and $c>1$, 
where the interchange of the above summation and the integration is justified by Fubini's theorem and Lemma \ref{lem:MeTr} (ii).
We shift the path of integration to the line $\Re(\xi) = - \Re(z) + \sigma_L - \delta $ with $\delta =(\sigma_L - \sigma_\phi)/2$ (see Figure \ref{fig:path2} below). 

\begin{figure}[H]
\centering
\def\xs{5}
\begin{tikzpicture}[scale = 1.2, xscale = \xs]
\def\RX{2.1}
\def\LX{0.4}
\def\UY{2.8}
\def\DY{-1}
\def\sphi{0.52}
\def\sf{0.58} 
\def\ss{0.95} 
\draw[->, opacity = .5] (\LX, 0) -- (\RX, 0) node[right, opacity = .5]{$\Re(z)$};
\draw[densely dashed, opacity = .5] (\sphi, \DY) -- (\sphi, \UY);
\draw (0.52, 0) node[below left, opacity = .5]{$\sigma_\phi$};
\draw[densely dashed, opacity = .5] (\sf, \DY) -- (\sf, \UY);
\draw (\sf, 0) node[below right, opacity = .5]{$\sigma_L$};
\draw[densely dashed, opacity = .5] (\ss, \DY) -- (\ss, \UY);
\draw (\ss, 0) node[below left, opacity = .5]{$\sigma_R$};
\draw[densely dashed, opacity = .5] (1, \DY) -- (1, \UY);
\draw (1, 0) node[below right, opacity = .5]{$1$}; 
\def\zx{0.83}
\def\zy{0.45}
\begin{scope}[shift = {(\zx, \zy)}, xscale = 1/\xs]
\fill[opacity = .5] (0, 0) circle(1.5pt) node[above right, opacity = .5]{$z$};
\draw (0, 0) node[below left]{$O$}; 
\end{scope}
\draw[thick, ->] (\LX, \zy) -- (\RX, \zy) node[right]{$\Re(\xi)$}; 
\draw[thick, ->] (\zx, \DY) -- (\zx, \UY) node[above]{$\Im(\xi)$}; 
\draw[very thick] (\zx + 1.1, \DY) -- (\zx + 1.1, \UY); 
\draw[very thick] (\sphi/2 + \sf/2, \DY) -- (\sphi/2 + \sf/2, \UY); 
\draw[very thick, ->] (\zx + 1.1, \zy) -- (\zx + 1.1, \zy + 0.6) node[right]{$(c)$}; 
\draw[very thick, ->] (\sphi/2 + \sf/2, \zy) -- (\sphi/2 + \sf/2, \zy + 0.6) ;
\draw (\sphi/2 + \sf/2 - 0.02, \zy + 0.6) node[left]{$(- \Re(z) + \sigma_L- \delta)$}; 
\begin{scope}[shift = {(1.6, 1.6)}, xscale = 1/\xs]
\fill (0, 0) circle(1.5pt) node[above]{$\xi$};
\draw (0, 0) node[below, opacity = .5]{$z + \xi$};
\end{scope}
\begin{scope}[shift = {(1, 0)}, xscale = 1/\xs]
\fill (0, 0) circle(1.5pt);
\end{scope}
\draw[-{Stealth[length=2mm]}]
(1.35, -0.5) node[below]{$\xi = 1 - z$} to [out = 90, in = 0]
(1.1, 0.15) to [out = 190, in = 70]
(1.01, 0.02);
\end{tikzpicture}
\caption{The shift of the path of integration}
\label{fig:path2}
\end{figure}

By Lemma \ref{lem:MeTr} (ii), and the axiom $(S3)$ of $\widetilde{\mathcal{S}}$, 
we can shift the above path of integration to deduce
\begin{equation}\label{eqn:P1}
\phi(z) - \phi_{X}(z) 
= - \frac{1}{2 \pi i} \int_{- \Re(z) + \sigma_L - \delta - i \infty }^{ - \Re(z) + \sigma_L - \delta + i \infty} \phi(z + \xi) \widehat{\lambda}(\xi) X^{\xi} d \xi - E(z),
\end{equation}
where $E(z)$ is given by \eqref{eqn:RE} and the estimate $\displaystyle E(z) \ll X \left( | \Im (z) | + 2 \right)^{-2}$ uniformly for $\sigma _L \leq \Re(z) \leq \sigma_R$ by \eqref{eqn:E-res}. 

By Cauchy's integral formula, we have
\[
\phi(s + i \tau) - \phi_X(s + i \tau)
 = \frac{1}{2 \pi i} \int_{\partial \mathcal{R}} \frac{ \phi(z + i \tau) - \phi_X(z + i \tau) }{z - s} dz
\]
for $s \in C$ and $\tau \in \mathbb{R}$, 
where $\partial A$ denotes the boundary of a set $A$, and the path of integration $\partial \mathcal{R}$ is oriented counterclockwise. 
Hence we have
\begin{align*}
&\frac{1}{T} \int_{0}^{T} \max_{s \in C} \left| \phi(s + i \tau) - \phi_X (s + i \tau) \right| d \tau \\
\ll&\, \frac{1}{\dist (C, \partial \mathcal{R})} \cdot \frac{1}{T} \int_{\partial \mathcal{R}} \left| dz \right| \int_{0}^{T} \left|  \phi(z + i \tau) - \phi_X (z + i \tau) \right| d \tau \\
\ll& \frac{\ell(\partial \mathcal{R})}{\dist (C, \partial \mathcal{R})} \sup_{\sigma_L \leq \sigma \leq \sigma_R} \left( \frac{1}{T}\int_{-2T}^{2T} \left| \phi(\sigma + i t) - \phi_X (\sigma + i t) \right| dt \right),
\end{align*}
where $\dist(C, \partial \mathcal{R})$ denotes the distance between $C$ and $\partial \mathcal{R}$, 
and $\ell( \partial \mathcal{R} )$ is the length of $\partial \mathcal{R}$. 
Now, by the equation \eqref{eqn:P1}, 
we have
\begin{align*}
&\phi(\sigma + i t) - \phi_X (\sigma + i t) \\
\ll&\, X^{- \delta} \int_{- \infty}^\infty \left| \phi \left( \sigma_L - \delta + i (t + v) \right) \right| \left| \widehat{\lambda}( - \sigma + \sigma_L - \delta + i v ) \right| d v
+ \left| E(\sigma + i \tau) \right|
\end{align*}
uniformly for $\sigma_L \leq \sigma \leq \sigma_R$. 
Thus we find
\begin{align*}
&\frac{1}{T} \int_{-2T}^{2T} \left| \phi(\sigma + i t) - \phi_X (\sigma + i t) \right| dt \\ 
\ll &\, X^{- \delta} \int_{- \infty}^{\infty} \left| \widehat{\lambda}( - \sigma + \sigma_L - \delta + i v ) \right| dv \left\{ \frac{1}{T} \int_{-2T}^{2T} \left| \phi \left( \sigma_L - \delta + i (t + v) \right) \right| dt \right\} \\
& \quad + \frac{1}{T} \int_{-2T}^{2T} \left| E(\sigma + i \tau) \right| dt \\
=:&\, S_1 + S_2
\end{align*}
uniformly for $\sigma_L \leq \sigma \leq \sigma_R$. 

We estimate $S_1$.
Since the estimate
\begin{align*}
\frac{1}{T} \int_{-2T}^{2T} \left| \phi \left( u + i (t + v) \right) \right| dt 
&\leq \frac{1}{T} \left( \int_{-2T}^{2T} \left| \phi \left( u + i (t + v) \right) \right|^2 dt \right)^{1/2} \left( \int_{0}^{2T} 1^2 dt \right)^{1/2} \\
&\ll \left( \frac{1}{T}  \int_{-2T - |v|}^{2T + |v|} \left| \phi \left( u + i t \right) \right|^2 dt \right)^{1/2} \\
&\ll \left( \frac{1}{T} \left( 4T + 2|v| \right) \right)^{1/2} \\
&\ll 1 + |v|
\end{align*}
holds uniformly for $\sigma_L - \delta \leq u \leq \sigma_R - \delta$ by Lemma \ref{lem:UME}, 
we have
\[
S_1
\ll X^{- \delta} \int_{- \infty}^{\infty} \left| \widehat{\lambda}( - \sigma + \sigma_L - \delta + i v ) \right| \left( 1 + |v| \right) dv
\ll X^{- \delta}
\]
uniformly for $\sigma_L \leq \sigma \leq \sigma_R$ by using Lemma \ref{lem:MeTr} (ii).

We estimate $S_2$. 
By the estimate \eqref{eqn:E-res}, 
we have $S_2 \ll X T^{-1}$ uniformly for $\sigma_L \leq \sigma \leq \sigma_R$. 

From the estimates $S_1$ and $S_2$, we deduce the conclusion. 
\end{proof}

Next, we consider the mean value theorem for the random model of the element in $\widetilde{\mathcal{S}}$.
For any $\omega \in \Omega$ and $n \in \mathbb{N}$, let $\omega(1) = 1$ and let
\[
\omega(n) = \prod_{j = 1}^k \omega(p_j)^{r_j}
\]
for $n \geq 2$, 
where $n = p_1^{r_1} \cdots p_k^{r_k}$ is the prime factorization of $n$. 
Then we have the following lemma:

\begin{lemma}\label{lem:RM}
Let $\phi \in \widetilde{\mathcal{S}}$. 
We have the following:
\begin{enumerate}[label=\rm{(\roman*)}]

\item The equation $\phi(s, \omega) = \sum_{n  = 1}^{\infty} a_\phi (n) \omega(n) n^{-s}$ holds for $\sigma>1/2$ almost surely.

\item The series $\phi(s, \omega)$ is holomorphic for $\sigma>1/2$ almost surely.

\item Let $\sigma_1 > \sigma_\phi$.
Then, for almost sure $\omega \in \Omega$, 
there exists $C(\sigma_1, \omega) > 0$ such that 
\[
\left| \phi(s, \omega) \right| 
\leq C(\sigma_1, \omega) \left( |t| + 2 \right)
\]
for $\Re(s) \geq \sigma_1$

\item Let $\sigma_1 > \sigma_\phi$. 
The expectation $\mathbb{E}^{\bm{m}} \left[ \left| \phi(s, \omega) \right| \right]$ is bounded uniformly for $\sigma \geq \sigma_1$

\end{enumerate}
\end{lemma}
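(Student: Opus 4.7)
The four parts will be proved in this order. For (i), in the absolute convergence region $\sigma > 1$, direct expansion of the polynomial Euler product gives $\phi(s,\omega) = \sum_n a_\phi(n)\omega(n)n^{-s}$ by the definition of $a_\phi(n)$ and the multiplicative extension of $\omega$. To extend this identity to $\sigma > 1/2$, I combine two inputs: first, the already-cited almost-sure convergence and holomorphy of the Euler product $\phi(s,\omega)$ on $\{\sigma > 1/2\}$ from \cite[p.~65]{S2007}; second, the orthogonality $\mathbb{E}^{\bm{m}}[\omega(m)\overline{\omega(n)}]=\delta_{m,n}$ arising from independence of the Haar-distributed coordinates $\omega(p)$, which together with (S1) shows that the partial sums $S_N(s,\omega) = \sum_{n\le N} a_\phi(n)\omega(n)n^{-s}$ are $L^2(\bm{m})$-bounded locally uniformly on $\{\sigma > 1/2\}$. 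A Menshov--Rademacher-type argument applied to the independent local Euler factors $L_p(s,\omega) = \sum_{k\ge 0} a_\phi(p^k)\omega(p)^k p^{-ks}$ upgrades this to almost-sure locally uniform convergence, and the limit must coincide with the Euler product by uniqueness of analytic continuation from $\sigma > 1$. Part (ii) is then an immediate consequence.

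For (iii), the key is subharmonicity of $|\phi(s,\omega)|^2$. Fix $\sigma_\phi < \sigma_0 < \sigma_1$ and set $r = (\sigma_1 - \sigma_0)/2$. For $s_0 = \sigma + it_0$ with $\sigma_1 \le \sigma \le 2$, the mean-value inequality gives
\[
|\phi(s_0,\omega)|^2 \ll_r \int_{\sigma_0}^{3}\!\int_{t_0-r}^{t_0+r} |\phi(\sigma'+it',\omega)|^2\, dt'\, d\sigma'.
\]
Fubini together with the orthogonality identity $\mathbb{E}^{\bm{m}}[|\phi(\sigma'+it',\omega)|^2] = \sum_n |a_\phi(n)|^2 n^{-2\sigma'}$ shows that the expectation of the right-hand side integrated over $t_0 \in [-T,T]$ is $O(T)$. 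Markov's inequality at the dyadic scale $T = 2^n$ with Borel--Cantelli (using $\sum n^{-2} < \infty$) then yields, almost surely, $|\phi(\sigma+it,\omega)|^2 \ll_\omega |t|\log^2(|t|+2)$ for $\sigma_1 \le \sigma \le 2$ and $|t|$ large---comfortably stronger than the required bound. The region $\sigma \ge 2$ is dispatched by absolute convergence via (i) with a bound independent of $\omega$, and a bounded range of $|t|$ by continuity of $\phi(\cdot,\omega)$.

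Part (iv) follows at once from Cauchy--Schwarz and orthogonality:
\[
\mathbb{E}^{\bm{m}}[|\phi(s,\omega)|] \le \bigl(\mathbb{E}^{\bm{m}}[|\phi(s,\omega)|^2]\bigr)^{1/2} = \Bigl(\sum_n |a_\phi(n)|^2 n^{-2\sigma}\Bigr)^{1/2} \le \Bigl(\sum_n |a_\phi(n)|^2 n^{-2\sigma_1}\Bigr)^{1/2} < \infty
\]
uniformly in $\sigma \ge \sigma_1$, the last series converging by $\sigma_1 > \sigma_\phi \ge 1/2$ and (S1). The main technical obstacle is in (i): turning pointwise $L^2(\bm{m})$-convergence of the Dirichlet series into almost-sure, locally uniform convergence, which is precisely where the Menshov--Rademacher-type input (or equivalently Kolmogorov's three-series theorem applied to the independent local factors $L_p$) is essential.
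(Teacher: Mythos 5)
Your parts (i), (ii) and (iv) follow essentially the same route as the paper: (i)--(ii) rest on Steuding's Lemmas~4.1--4.2 (identity of the Euler product with the random Dirichlet series and its almost-sure holomorphy on $\sigma>1/2$), and (iv) is, as in the paper, a one-line Cauchy--Schwarz plus the orthogonality identity $\mathbb{E}^{\bm{m}}[\omega(m)\overline{\omega(n)}]=\delta_{m,n}$. One small imprecision in your (i): Menshov--Rademacher applies to the orthonormal system $\{\omega(n)\}_n$ (which are \emph{not} independent), whereas the Kolmogorov two/three-series theorem is what one applies to the \emph{independent} local factors $\log L_p$; you conflate these, but either path, carried out correctly, reaches (i)--(ii), so this is a stylistic slip rather than a gap.

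Part (iii) is where you take a genuinely different route. The paper's argument is a very short and elementary one: set $\sigma_m=(\sigma_\phi+\sigma_1)/2$, note that by (i) the partial sums $S_u(\omega)=\sum_{n\le u}a_\phi(n)\omega(n)n^{-\sigma_m}$ are almost surely bounded (a convergent series has bounded partial sums), and then for $\sigma\ge\sigma_1$ write
\[
\phi(s,\omega)=(s-\sigma_m)\int_{1}^{\infty}\frac{S_u(\omega)}{u^{s+1-\sigma_m}}\,du
\ll_{\sigma_1,\omega}\frac{|s-\sigma_m|}{\sigma-\sigma_m}\ll_{\sigma_1,\omega}|t|+2
\]
by partial summation. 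Your proposal instead uses subharmonicity of $|\phi|^2$, the second-moment computation $\mathbb{E}^{\bm{m}}[|\phi(\sigma'+it',\omega)|^2]=\sum_n|a_\phi(n)|^2n^{-2\sigma'}$, Fubini, and a dyadic Borel--Cantelli argument, yielding the stronger pointwise bound $|\phi(\sigma+it,\omega)|^2\ll_\omega|t|\log^2(|t|+2)$ before patching in $\sigma\ge2$ by absolute convergence and bounded $|t|$ by continuity. This is correct and even sharper than what the lemma asserts, but it uses noticeably heavier machinery than the paper's two-line Abel-summation argument, and it additionally leans on the second-moment identity which the paper only invokes for (iv). Both approaches are valid; the paper's is simpler and self-contained given (i).
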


\begin{proof}
For the statements (i) and (ii), the proof can be seen in \cite[Lemma 4.1 and Lemma 4.2]{S2007}.

Next, we show the statement (iii). 
Let $\sigma_m = (\sigma_\phi + \sigma_1)/2$ and put 
\[
S_u(\omega) 
= \sum_{n \leq u} \frac{a_\phi (n) \omega(n)}{n^{\sigma_m}}.
\]
for $u \geq 1$. 
By the statement (i), $S_u(\omega)$ is bounded almost surely. 
Hence, for almost all $\omega \in \Omega$, 
there exists $C_0(\sigma_1, \omega)>0$ such that $\left| S_u \left( \omega \right) \right| \leq C_0(\sigma_1, \omega)$. 
We fix such $\omega \in \Omega$. 
Then, by partial summation, we have
\[
\phi \left(s, \omega \right)
= \int_{1-}^{\infty} \frac{d S_u \left( \omega \right)}{u^{s - \sigma_m}}
=(s - \sigma_m) \int_2^\infty \frac{ S_u\left( \omega \right) \,d u}{u^{s + 1 - \sigma_m}}
\ll_{\sigma_1, \omega} \left| t \right| + 2
\]
for $\sigma \geq \sigma_1$, which complete the proof of the statement (ii). 

Finally, we show the statement (iii). 
By the Cauchy-Schwarz inequality, 
we have
\[
\mathbb{E}^{\bm{m}} \left[ \left| \phi(s, \omega) \right| \right]
\leq \left( \mathbb{E}^{\bm{m}} \left[ \left| \phi(s, \omega) \right|^2 \right] \right)^{1/2}
=  \left( \sum_{n = 1}^\infty \frac{\left| a_\phi(n) \right|^2}{n^{2 \sigma}} \right)^{1/2}
\leq \left( \sum_{n = 1}^\infty \frac{\left| a_\phi(n) \right|^2}{n^{2 \sigma_1}} \right)^{1/2}
< \infty
\]
for $\sigma \geq \sigma_1$, 
where the convergence of the last series is deduced from the axiom (i) of $\widetilde{\mathcal{S}}$. 
This completes the proof. 
\end{proof}

For $X \geq 2$, we define the function $\phi_X (s,\, \omega)$ by 
\[
\phi_X (s,\, \omega)
= \sum_{n = 1}^\infty \frac{a_\phi (n) \omega(n) \lambda( n /X )}{n^s}. 
\]

The following proposition is the random model version of Proposition \ref{prop:MT}. 

\begin{proposition}\label{prop:LMRM}
Let $\phi(s)$ be an element of $\widetilde{\mathcal{S}}$ and $C$ be a compact subset of $D_{\phi}$. 
Then we have
\[
\lim_{X \rightarrow \infty} \mathbb{E}^{\bm{m}} \left[ \max_{s \in C} \left| \phi(s,\, \omega) - \phi_X (s,\, \omega) \right| \right] = 0. 
\]
\end{proposition}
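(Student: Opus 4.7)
My plan is to mirror the proof of Proposition~\ref{prop:MT} almost line for line, replacing the time average $T^{-1}\int_0^T$ by the probabilistic expectation $\mathbb{E}^{\bm{m}}$ and invoking the random mean-value bound Lemma~\ref{lem:RM}(iv) in place of the deterministic Lemma~\ref{lem:UME}. First, I enclose $C$ in a rectangle $\mathcal{R} = [\sigma_L, \sigma_R] \times i[-R, R]$ with $\sigma_\phi < \sigma_L < \min_{s \in C}\Re(s)$, $\max_{s \in C}\Re(s) < \sigma_R < 1$, and $R > \max_{s \in C}|\Im(s)|$, and set $\delta = (\sigma_L - \sigma_\phi)/2$. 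Since $\phi(\cdot, \omega) - \phi_X(\cdot, \omega)$ is holomorphic on $D_\phi$ almost surely by Lemma~\ref{lem:RM}(ii), Cauchy's integral formula gives
\[
\max_{s \in C}\left|\phi(s,\omega) - \phi_X(s, \omega)\right| \ll \int_{\partial\mathcal{R}}\left|\phi(z,\omega) - \phi_X(z,\omega)\right| |dz|,
\]
with implicit constant depending only on $\dist(C, \partial\mathcal{R})$. Taking expectation and applying Tonelli's theorem, it suffices to prove $\mathbb{E}^{\bm{m}}[|\phi(z,\omega) - \phi_X(z, \omega)|] \ll X^{-\delta}$ uniformly for $z \in \partial\mathcal{R}$.

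For this pointwise bound I would use Mellin inversion, paralleling the treatment in Proposition~\ref{prop:MT}. For $c > 1$, axiom (S1) ensures absolute convergence of $\sum_n a_\phi(n)\omega(n)/n^{z+\xi}$ on $\Re(\xi) = c$, so Fubini together with Lemma~\ref{lem:MeTr}(ii) gives
\[
\phi_X(z, \omega) = \frac{1}{2\pi i}\int_{(c)}\phi(z + \xi, \omega)\widehat{\lambda}(\xi) X^{\xi}\, d\xi.
\]
I would then shift the line of integration to $\Re(\xi) = -\Re(z) + \sigma_L - \delta$, crossing only the simple pole of $\widehat{\lambda}$ at $\xi = 0$ (there is no pole of $\phi(\cdot, \omega)$ in $\Re > 1/2$ almost surely), which contributes $\phi(z, \omega)$. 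Taking absolute values, using $X^{\sigma_L - \delta - \Re(z)} \leq X^{-\delta}$, then interchanging $\mathbb{E}^{\bm{m}}$ with the integral by Tonelli and applying Lemma~\ref{lem:RM}(iv) (valid since $\sigma_L - \delta > \sigma_\phi$) together with Lemma~\ref{lem:MeTr}(ii) yields the desired $X^{-\delta}$ bound uniformly on $\partial\mathcal{R}$.

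The main technical point, and the place where the argument departs from its deterministic analogue, is the justification of the contour shift almost surely: the random model $\phi(s, \omega)$ does not satisfy the deterministic convexity axiom (S3), but Lemma~\ref{lem:RM}(iii) provides an $\omega$-dependent bound $|\phi(s, \omega)| \leq C(\sigma_1, \omega)(|t| + 2)$ on $\Re(s) \geq \sigma_1 > \sigma_\phi$. Combined with the super-polynomial decay of $\widehat{\lambda}$ in Lemma~\ref{lem:MeTr}(ii), this makes the integrand on horizontal segments $\Im(\xi) = \pm T$ of size $O_\omega(T^{1-N})$ for any $N$, so the shift is valid on a set of full $\bm{m}$-measure. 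All remaining steps are routine.
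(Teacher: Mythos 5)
Your proposal is correct and follows essentially the same route as the paper's proof: enclose $C$ in a rectangle $\mathcal{R}$, use Cauchy's integral formula to reduce to a boundary estimate, express $\phi_X(\cdot,\omega)$ by Mellin inversion, shift the contour to $\Re(\xi)=-\Re(z)+\sigma_L-\delta$ using the $\omega$-dependent polynomial bound of Lemma~\ref{lem:RM}(iii) to control the horizontal segments, pick up only the residue $\phi(z,\omega)$ from the pole of $\widehat{\lambda}$ at $\xi=0$ (correctly noting there is no pole of the random model), and finally take expectations and apply the uniform bound on $\mathbb{E}^{\bm m}[|\phi(\sigma_L-\delta+iv,\omega)|]$ together with the rapid decay of $\widehat{\lambda}$ to obtain the $X^{-\delta}$ bound. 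The only cosmetic difference is that you take the expectation before performing Mellin inversion while the paper inverts first and averages afterward; the content is identical.
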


\begin{proof}
We fix a rectangle $\mathcal{R} = [\sigma_L, \sigma_R] \times i [ - R, R ]$ satisfying
\[
\sigma_\phi < \sigma_L < \min_{s \in C} \Re(s) \leq \max_{s \in C} \Re(s) < \sigma_R < 1
\AND R > |\max_{s \in C} \Im(s)| \lor |\min_{s \in C} \Im(s)|,
\]
where the figure of $\mathcal{R}$ can be seen in Figure \ref{fig:rec}.
Suppose that $\omega \in \Omega$ satisfies the statement (ii) and (iii) in Lemma \ref{lem:RM}. 
By the Mellin inversion formula for $\lambda(x)$, as is stated in Lemma \ref{lem:MeTr} (iii), 
we have
\[
\phi_X(z, \omega)
= \frac{1}{2 \pi i} \sum_{n = 1}^\infty \int_{c - i \infty}^{c + i \infty} \frac{a_\phi (n) \omega(n) \widehat{\lambda} (\xi) }{n^{z}} \left( \frac{n}{X} \right)^{ - \xi } d \xi
= \frac{1}{2 \pi i} \int_{c - i \infty}^{c + i \infty} \phi(z + \xi, \omega) \widehat{\lambda} (\xi) X^{\xi} d \xi
\]
for $z \in D_\phi$ and $c > 1$, 
where the interchange of the above summation and the integration is justified by Fubini's theorem and Lemma \ref{lem:MeTr} (ii).
We shift the path of integration to the line $\Re(\xi) = - \Re(z) + \sigma_L - \delta $ with $\delta =(\sigma_L - \sigma_\phi)/2$, where the figure of the path of integration can be seen in Figure \ref{fig:path2}.
By Lemma \ref{lem:MeTr} (ii), and Lemma \ref{lem:RM} (iii), 
we can shift the above path of integration to deduce
\begin{equation}\label{eqn:P2}
\phi(z, \omega) - \phi_{X}(z, \omega) 
= - \frac{1}{2 \pi i} \int_{- \Re(z) + \sigma_L - \delta - i \infty }^{ - \Re(z) + \sigma_L - \delta + i \infty} \phi(z + \xi, \omega) \widehat{\lambda}(\xi) X^{\xi} d \xi.
\end{equation}
By Cauchy's integral formula, we have
\[
\phi(s, \omega) - \phi_X(s, \omega)
 = \frac{1}{2 \pi i} \int_{\partial \mathcal{R}} \frac{ \phi(z, \omega) - \phi_X(z, \omega) }{z - s} dz
\]
for $s \in C$. 
Hence we have, by the equation \eqref{eqn:P2}, 
\begin{align*}
& \max_{s \in C} \left| \phi(s,\, \omega) - \phi_X (s,\, \omega) \right| \\
\leq &\, \frac{1}{2 \pi \dist(C,\, \partial \mathcal{R})} \int_{\partial \mathcal{R}} \left| \phi(z, \omega) - \phi_X(z, \omega) \right| \left| d z \right| \\
\leq &\, \frac{X^{- \delta}}{4 \pi^2 \dist(C,\, \partial \mathcal{R})} \int_{\partial \mathcal{R}} \left| d z \right| \int_{- \infty}^{\infty} \left| \phi \left( \sigma_L - \delta + i (\Im(z) + v), \omega \right) \right| \left| \widehat{\lambda}( - \Re(z) + \sigma_L - \delta + i v ) \right| d v.
\end{align*}
Taking the expectation, we obtain
\begin{align*}
& \mathbb{E}^{\bm{m}} \left[ \max_{s \in C} \left| \phi(s,\, \omega) - \phi_X (s,\, \omega) \right| \right] \\
\leq &\, \frac{ \ell\left( \partial \mathcal{R} \right)X^{- \delta}}{4 \pi^2 \dist(C,\, \partial \mathcal{R})} \times \\
&\times \sup_{z \in \partial \mathcal{R}} \int_{- \infty}^{\infty} \mathbb{E}^{\bm{m}} \left[ \left| \phi \left( \sigma_L - \delta + i (\Im(z) + v), \omega \right) \right| \right] \left| \widehat{\lambda}( - \Re(z) + \sigma_L - \delta + i v ) \right| d v.
\end{align*}
By Lemma \ref{lem:RM} (iii) and Lemma \ref{lem:MeTr}, 
we find that the value
\[
\int_{- \infty}^{\infty} \mathbb{E}^{\bm{m}} \left[ \left| \phi \left( \sigma_L - \delta + i (\Im(z) + v), \omega \right) \right| \right] \left| \widehat{\lambda}( - \Re(z) + \sigma_L - \delta + i v ) \right| d v
\]
is bounded uniformly for $z \in \partial \mathcal{R}$. 
Therefore we obtain 
\[
\mathbb{E}^{\bm{m}} \left[ \max_{s \in C} \left| \phi(s,\, \omega) - \phi_X (s,\, \omega) \right| \right]
\ll X^{- \delta}
\rightarrow 0
\] 
as $X \rightarrow \infty$. 
This completes the proof.
\end{proof}

Let $\mathcal{P}_{F}$ be a finite subset of the set of all prime numbers. 
Define the probability measure $\mathbb{H}_T^{\mathcal{P}_F}$ on $\left( \mathbb{T}^{\mathcal{P}_{F}}, \mathcal{B}( \mathbb{T}^{\mathcal{P}_{F}} ) \right)$ by
\[
\mathbb{H}_T^{\mathcal{P}_F} \left( A \right)
= \frac{1}{T} \meas\left\{ \tau \in [0, T] \mid (p^{i \tau})_{p \in \mathcal{P}_F } \in A \right\}
\]
for $A \in \mathbb{T}^{\mathcal{P}_{F}}$. 

On the other hand, since the set $\mathbb{T}^{\mathcal{P}_{F}}$ is a compact Hausdorff space, 
there exists the probability Haar measure $\bm{m}_{\mathcal{P}_F}$ on $\left( \mathbb{T}^{\mathcal{P}_{F}}, \mathcal{B}( \mathbb{T}^{\mathcal{P}_{F}} ) \right)$. 

Then we have the following lemma:

\begin{lemma}\label{lem:HM}
The probability $\mathbb{H}_T^{\mathcal{P}_F}$ converges weakly to $\bm{m}_{\mathcal{P}_F}$ as $T \rightarrow \infty$. 
\end{lemma}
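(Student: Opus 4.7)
The plan is to prove weak convergence on the compact abelian group $\mathbb{T}^{\mathcal{P}_F}$ by Fourier-analytic means, i.e.\ by verifying that the integrals of all characters against $\mathbb{H}_T^{\mathcal{P}_F}$ converge to their integrals against the Haar measure $\bm{m}_{\mathcal{P}_F}$. Since $\mathbb{T}^{\mathcal{P}_F}$ is a compact metric space and the characters span (via Stone--Weierstrass) a uniformly dense subalgebra of $C(\mathbb{T}^{\mathcal{P}_F})$, such a statement immediately upgrades to weak convergence.

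The characters of $\mathbb{T}^{\mathcal{P}_F}$ are the mappings $(x_p)_{p \in \mathcal{P}_F} \mapsto \prod_{p \in \mathcal{P}_F} x_p^{n_p}$ indexed by $\underline{n} = (n_p)_{p \in \mathcal{P}_F} \in \mathbb{Z}^{\mathcal{P}_F}$. For the trivial character $\underline{n} = 0$, both integrals obviously equal $1$. For a nontrivial character, I would compute
\[
\int_{\mathbb{T}^{\mathcal{P}_F}} \prod_{p \in \mathcal{P}_F} x_p^{n_p} \, d\mathbb{H}_T^{\mathcal{P}_F}(x)
= \frac{1}{T} \int_0^T \exp\!\left( i \tau \sum_{p \in \mathcal{P}_F} n_p \log p \right) d\tau.
\]
The exponent $\sum_{p \in \mathcal{P}_F} n_p \log p = \log \prod_{p \in \mathcal{P}_F} p^{n_p}$ vanishes if and only if $\prod p^{n_p} = 1$, which by the unique factorization of rationals forces $n_p = 0$ for every $p \in \mathcal{P}_F$. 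Hence for nontrivial $\underline{n}$ the exponent is a nonzero real number, and the integral equals $O_{\underline n}(1/T) \to 0$ as $T \to \infty$, matching $\int \chi_{\underline n} \, d\bm{m}_{\mathcal{P}_F} = 0$.

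To promote character convergence to weak convergence, I would argue as follows. By linearity, $\int P \, d\mathbb{H}_T^{\mathcal{P}_F} \to \int P \, d\bm{m}_{\mathcal{P}_F}$ for every trigonometric polynomial $P$ on $\mathbb{T}^{\mathcal{P}_F}$. Given $f \in C(\mathbb{T}^{\mathcal{P}_F})$ and $\epsilon > 0$, Stone--Weierstrass provides a trigonometric polynomial $P$ with $\|f - P\|_\infty < \epsilon$; since both $\mathbb{H}_T^{\mathcal{P}_F}$ and $\bm{m}_{\mathcal{P}_F}$ are probability measures, the triangle inequality yields $\limsup_{T\to\infty} |\int f \, d\mathbb{H}_T^{\mathcal{P}_F} - \int f \, d\bm{m}_{\mathcal{P}_F}| \leq 2\epsilon$, which proves the claim.

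I expect no serious obstacle here: the only nontrivial input is the $\mathbb{Q}$-linear independence of $\{\log p : p \in \mathcal{P}_F\}$, which is immediate from unique factorization. Everything else is bookkeeping with the Stone--Weierstrass approximation argument on a compact group.
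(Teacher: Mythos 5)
Your proof is correct and is essentially the standard argument. The paper does not give its own proof of this lemma; it simply refers to the proof of Lemma 4.4 in Steuding's book \cite{S2007}, and that proof uses precisely the same Weyl/Fourier-analytic strategy you outline: verify convergence of the Fourier transforms (characters), using that the nontrivial characters of $\mathbb{T}^{\mathcal{P}_F}$ integrate to $o(1)$ because $\{\log p : p \in \mathcal{P}_F\}$ is linearly independent over $\mathbb{Q}$, and then upgrade to weak convergence via Stone--Weierstrass. No gaps.
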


\begin{proof}
For example, the proof can be seen in \cite[in the proof of Lemma 4.4 in p.~68]{S2007}.
\end{proof}

\begin{proof}[Proof of (i) of Theorem \ref{thm:main}.]
Let $X$ be a large number with $X>\max \mathcal{P}_N$. 
Let $\mathbb{P}_T$ denote the probability measure on $\left( [0, T], \mathcal{B}([0, T]) \right)$ given by
\[
\mathbb{P}_T (E) = \frac{1}{T} \meas\left( E \right), \quad E \in \mathcal{B}([0, T]). 
\]
Let $F:\prod_{j = 1}^{r} H(D_{\phi_j}) \times \mathbb{T}^{\mathcal{P}_N} \rightarrow \mathbb{R}$ be a bounded Lipschitz continuous function. 
Then there exist $M(F)$, $C(F)$ such that
\[
\left| F\left(\bm{f}, \bm{x} \right) \right| \leq M(F) \AND \left| F\left( \bm{f}, \bm{x} \right) - F\left( \bm{g}, \bm{y} \right) \right| \leq C(F) d\left( (\bm{f}, \bm{x}) ,\, (\bm{g}, \bm{y}) \right )
\]
for any $(\bm{f}, \bm{x}) = \left(  (f_j)_{j = 1}^r, ( x_{p_{k_n}} )_{n = 1}^N \right),\, (\bm{g}, \bm{y}) = \left(  (g_j)_{j = 1}^r, ( y_{p_{k_n}} )_{n = 1}^N \right) \in \prod_{j = 1}^{r} H(D_{\phi_j}) \times \mathbb{T}^{\mathcal{P}_N}$. 
It is enough to show that $\left| \mathbb{E}^{\nu_{T, \underline{\phi},  \mathcal{P}_N }} [ F ] - \mathbb{E}^{\nu_{\underline{\phi},\, \mathcal{P}_N }} [ F ] \right| \rightarrow 0$ as $T \rightarrow \infty$ by a property of weak convergence (see e.g. \cite[Theorem 3.9.1]{D2010}). 
For brevity, we write
\[
\bm{\gamma}_{\mathcal{P}_N}(\tau) = \left( p^{i \tau} \right)_{p \in \mathcal{P}_N}
\AND
\omega_{\mathcal{P}_N} = \left( \omega(p) \right)_{p \in \mathcal{P}_N}
\]
We find that
\begin{align*}
\left| \mathbb{E}^{\nu_{T, \underline{\phi},  \mathcal{P}_N }} \left[ F \right] - \mathbb{E}^{\nu_{\underline{\phi},\, \mathcal{P}_N }} \left[ F \right] \right|
&= \left| \mathbb{E}^{\mathbb{P}_T} \left[ F \left( \underline{\phi}(s + i \tau),\, \bm{\gamma}_{\mathcal{P}_N}(\tau)   \right) \right] - \mathbb{E}^{\bm{m}} \left[ F \left( \underline{\phi}(s,\, \omega),\, \omega_{\mathcal{P}_N} \right) \right] \right| \\
&\leq \left| \mathbb{E}^{\mathbb{P}_T} \left[ F \left( \underline{\phi}(s + i \tau),\, \bm{\gamma}_{\mathcal{P}_N}(\tau)  \right) \right] - \mathbb{E}^{\mathbb{P}_T} \left[ F \left( \underline{\phi}_{X}(s + i \tau),\, \bm{\gamma}_{\mathcal{P}_N}(\tau) \right) \right] \right| \\
& \quad + \left| \mathbb{E}^{\mathbb{P}_T} \left[ F \left( \underline{\phi}_{X}(s + i \tau),\, \bm{\gamma}_{\mathcal{P}_N}(\tau) \right) \right] - \mathbb{E}^{\bm{m}} \left[ F \left( \underline{\phi}_X(s,\, \omega),\, \omega_{\mathcal{P}_N} \right) \right] \right| \\
& \quad + \left| \mathbb{E}^{\bm{m}} \left[ F \left( \underline{\phi}_X(s,\, \omega),\, \omega_{\mathcal{P}_N} \right) \right] - \mathbb{E}^{\bm{m}} \left[ F \left( \underline{\phi}(s,\, \omega),\, \omega_{\mathcal{P}_N} \right) \right] \right| \\
& =: \Sigma_1(T,\, X) + \Sigma_2(T,\, X) + \Sigma_3(T,\, X)
\end{align*}

First, we estimate the term $\Sigma_1(T,\, X)$.
We have
\begin{align*}
\Sigma_1(T,\, X) 
&\leq C(F)  \mathbb{E}^{\mathbb{P}_T} \left[ d \left( ( \underline{\phi}(s + i \tau),\, \bm{\gamma}_{\mathcal{P}_N}(\tau) ) ,\, ( \underline{\phi}_{X}(s + i \tau),\, \bm{\gamma}_{\mathcal{P}_N}(\tau) ) \right) \right] \\
&\leq C(F) \sum_{j =1}^r \sum_{\ell = 1}^\infty \frac{ \mathbb{E}^{\mathbb{P}_T} \left[ d_{\ell,\, \phi_j} \left( \phi_j (s + i \tau),\, {\phi_j}_{, X} (s + i \tau) \right) \right] \land 1 }{2^{\ell}}.
\end{align*}
As we take the limit $\lim_{T \rightarrow \infty} \lim_{X \rightarrow \infty}$, 
the above tends to 0 by Proposition \ref{prop:MT} and Lebesgue's dominated convergence theorem. 

Next, we estimate $\Sigma_2(T,\, X)$. 
Put
\[
\mathcal{P}( \lambda,\, X ) 
=\left\{ p \mid \textrm{$p$ divides $\displaystyle \prod_{\substack{n \in \mathbb{N}; \\ \lambda(n/X) \neq 0 }} n $} \right\}.
\]
Note that the inclusion $\mathcal{P}_N \subset \mathcal{P}( \lambda,\, X )$ holds if $X$ is large enough.
We define the continuous function $\Phi_X : \mathbb{T}^{\mathcal{P}( \lambda,\, X )}  \rightarrow \prod_{j = 1}^r H(D_{\phi_j}) \times \mathbb{T}^{\mathcal{P}_N} $ by
\[
\Phi_X(\omega) = \left( \left( \phi_{j,\, X} (\omega)  \right)_{j = 1}^r,\, (\omega(p))_{p \in \mathcal{P}_N} \right)
\]
for $\omega = \left( \omega(p) \right)_{p \in \mathcal{P}( \lambda,\, X ) } \in \mathbb{T}^{\mathcal{P}( \lambda,\, X )}$.
Here, $\phi_{j ,\, X}(\omega)$ is given by
\[
\phi_{j ,\, X}(\omega)
= \sum_{\substack{n = 1; \\ \lambda(n/X) \neq 0 }}^{\infty} \frac{ a_{\phi_j}(n) \lambda(n/X) }{n^s} \prod_{p | n} \omega(p)^{ - \nu(p;n)} 
\]
for $j = 1, \ldots, r$, and the symbol $\nu(p;n)$ stands for the exponent of $p$ in the prime factorization of $n$.
Then we have
\begin{align*}
&\mathbb{E}^{\mathbb{P}_T} \left[ F \left( \underline{\phi}_{X}(s + i \tau),\, \bm{\gamma}_{\mathcal{P}_N}(\tau) \right) \right]
= \mathbb{E}^{ \mathbb{H}_{T}^{ \mathcal{P}( \lambda,\, X ) } \circ {\Phi_X}^{-1}} \left[ F \right] \\
\rightarrow&\, \mathbb{E}^{ \bm{m}_{\mathcal{P}( \lambda,\, X )} \circ {\Phi_X}^{-1} } \left[ F \right] 
=\mathbb{E}^{\bm{m}} \left[ F \left( \left( \underline{\phi}_X(s,\, \omega),\, \omega_{\mathcal{P}_N} \right) \right) \right]
\end{align*}
as $T \rightarrow \infty$ by Lemma \ref{lem:HM} and a property of weak convergence (see e.g. \cite[Section 2, The Mapping Theorem]{B1999}). 
Hence we obtain $\lim_{T \rightarrow \infty} \Sigma_2(T, X) = 0$. 

Finally, we estimate $\Sigma_3(T, X)$. 
We have
\begin{align*}
\Sigma_3(T, X) 
\leq&\, C(F) \mathbb{E}^{\bm{m}} \left[ d \left( \left( \underline{\phi}_X(s,\, \omega),\, \omega_{\mathcal{P}_N} \right),\, \left( \underline{\phi} (s,\, \omega),\, \omega_{\mathcal{P}_N} \right) \right) \right] \\
\leq &\,  C(F) \sum_{j =1}^r \sum_{\ell = 1}^\infty \frac{ \mathbb{E}^{\bm{m}} \left[ d_{\ell,\, \phi_j} \left( \phi_{j, X} (s,\, \omega),\, {\phi_j} (s,\, \omega) \right) \right] \land 1 }{2^{\ell}},
\end{align*}
and the right-hand side of the above inequality tends to $0$ as $X \rightarrow \infty$ by Proposition \ref{prop:LMRM} and Lebesgue's dominated convergence theorem. 
This completes the proof.
\end{proof}

\section{Proof of (ii) of Theorem \ref{thm:main}}

In the following, we define
\[
\log \phi(s, \omega) = \sum_{p} \log \phi_p (s, \omega(p))
\]
for $\phi \in \widetilde{\mathcal{S}}$ and $\omega \in \Omega$, 
where we put 
\[
\log \phi_p (s, z) = - \sum_{j = 1}^{m_\phi} \Log \left( 1 - \frac{\alpha_{j, \phi}(p) z}{ p^{s}} \right)
\]
for $z \in \mathbb{T}$.
Here, $\Log(z)$ stands for the principal branch of logarithm.
We can check that
\[
\mathbb{E}^{\bm{m}} \left[ \log \phi_p (s, \omega(p)) \right] =0 \AND 
\sum_p \mathbb{E}^{\bm{m}} \left[ \left| \log \phi_p (s, \omega(p)) \right|^2 \right] < \infty
\]
by using the axioms of $\widetilde{\mathcal{S}}$, 
where we use the estimate $\left| \alpha_{g, \phi_j}(p) \right| \leq 1$. 
This estimate is proved in Lemma 2.2 in \cite{S2007}. 
Hence we deduce from Kolmogorov's theorem (see e.g. \cite[Theorem B.10.1]{K2021}) that $\log \phi(s, \omega)$ converges almost everywhere. 
Noting that the equation
\[
L(s, \omega) = \exp \left( \log \phi(s, \omega) \right)
\]
holds almost everywhere, 
we first consider the support of $\log \phi(s, \omega)$. 

\begin{lemma}\label{lem:dense}
Let the assumption be the same as in Theorem \ref{thm:main}.
Then, the set of all convergent element
\[
\left( \left( \log \phi_j (s, \omega) \right)_{j = 1}^r,\, \left( \omega(p) \right)_{p \in \mathcal{P}_N} \right)
\]
is dense in $\prod_{j = 1}^{r} H(D_{\phi_j}) \times \mathbb{T}^{\mathcal{P}_N}$. 

\end{lemma}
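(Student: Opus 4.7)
The plan is to reduce this hybrid density statement to the density assumption \eqref{eqn:DSC}, exploiting the fact that the ``hybrid'' coordinates indexed by $\mathcal{P}_N$ are decoupled from the coordinates used to drive the universality approximation. The key computational ingredient is the decomposition
\[
\log \phi_{j,p}(s, z)
= \frac{a_{\phi_j}(p) z}{p^s} + r_{j,p}(s, z),
\]
which holds because expanding the Euler factor to first order gives $\sum_{j'=1}^{m_{\phi_j}} \alpha_{j',\phi_j}(p) = a_{\phi_j}(p)$. Using the bound $|\alpha_{j',\phi_j}(p)| \le 1$ (cited earlier from Lemma 2.2 of \cite{S2007}), the remainder satisfies $r_{j,p}(s, z) \ll p^{-2\sigma}$ uniformly for $|z| \le 1$ and for $s$ in any compact subset of $\{\sigma > 1/2\}$; in particular $\sum_p r_{j,p}(s, \omega(p))$ converges absolutely and uniformly in $\omega \in \Omega$ on any prescribed compact set $K_j \subset D_{\phi_j}$.

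Given a target $(\underline{f}, \underline{y}) \in \prod_{j=1}^r H(D_{\phi_j}) \times \mathbb{T}^{\mathcal{P}_N}$, compact sets $K_j \subset D_{\phi_j}$, and $\epsilon > 0$, I would first set $\omega(p) := y_p$ for $p \in \mathcal{P}_N$, which matches the $\mathbb{T}^{\mathcal{P}_N}$-component exactly. Next I pick $X_0 > \max \mathcal{P}_N$ large enough that
\[
\sup_{\omega \in \Omega} \max_{s \in K_j} \left| \sum_{p > X_0} r_{j,p}(s, \omega(p)) \right| < \frac{\epsilon}{2}
\qquad (j = 1, \dots, r),
\]
which is possible by the absolute convergence above. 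For the finitely many primes $p \le X_0$ with $p \notin \mathcal{P}_N$ I fix any convenient value, say $\omega(p) := 1$. With these choices, define the residual target
\[
g_j(s) := f_j(s) - \sum_{p \in \mathcal{P}_N} \log \phi_{j,p}(s, y_p) - \sum_{\substack{p \le X_0 \\ p \notin \mathcal{P}_N}} \log \phi_{j,p}(s, 1) \in H(D_{\phi_j}).
\]

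Now I invoke the density hypothesis \eqref{eqn:DSC} with $X = X_0$ to select $c(p) \in \mathbb{T}$ for $p > X_0$ such that, simultaneously for all $j = 1, \ldots, r$,
\[
\max_{s \in K_j} \left| g_j(s) - \sum_{p > X_0} \frac{a_{\phi_j}(p) c(p)}{p^s} \right| < \frac{\epsilon}{2},
\]
and set $\omega(p) := c(p)$ for $p > X_0$. Combining the decomposition of $\log \phi_{j,p}$ with the two displayed error bounds yields $\max_{s \in K_j} |f_j(s) - \log \phi_j(s, \omega)| < \epsilon$ for every $j$, while $(\omega(p))_{p \in \mathcal{P}_N} = \underline{y}$ holds exactly. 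Since $K_j$ and $\epsilon$ were arbitrary, this verifies the asserted density.

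I do not expect a genuine obstacle here: the heavy lifting is encoded in the hypothesis \eqref{eqn:DSC}, which is the standard Bagchi-type denseness input. The only conceptually new step compared with the classical denseness lemma for ordinary universality is the decoupling argument — the coordinates $\omega(p)$ for $p \in \mathcal{P}_N$ contribute only finitely many explicit terms to each $\log \phi_j(s, \omega)$, so they can be pinned to $y_p$ at the outset and absorbed into the fixed function $g_j$ before \eqref{eqn:DSC} is applied. This matches the remark in the introduction that the hybrid denseness lemma is obtained by ``a slight modification'' of the ordinary one.
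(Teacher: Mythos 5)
Your proof is correct and follows essentially the same route as the paper's: pin $\omega(p)=y_p$ on $\mathcal{P}_N$, absorb the finitely many primes $p\le X_0$ into a shifted target $g_j$, truncate so that the tail $\sum_{p>X_0}\bigl(\log\phi_{j,p}(s,\omega(p))-a_{\phi_j}(p)\omega(p)p^{-s}\bigr)$ is uniformly small, and then invoke the density hypothesis \eqref{eqn:DSC} to hit $g_j$. The only cosmetic difference is that you phrase the approximation in terms of sup-norms on arbitrary compacta $K_j$ rather than the Fr\'echet metric $d_{\phi_j}$, which is equivalent.
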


\begin{proof}
Let $\left( (f_j (s) )_{j = 1}^r,\, \left( z(p) \right)_{p \in \mathcal{P}_N} \right) \in \prod_{j = 1}^{r} H(D_{\phi_j}) \times \mathbb{T}^{\mathcal{P}_N}$ and $\epsilon > 0$. 
Let $X$ be a positive parameter. 
For $w_X = \left( w(p) \right)_{p > X} \in \prod_{p > X} \mathbb{T}_p$, 
we have
\[
h_{j, X}(s, w_X)
:=\sum_{p > X} \left( \log \phi_{j, p} (s,\, w(p)) - \frac{ a_{\phi_{j}}(p) w(p) }{p^s} \right)
= \sum_{p > X} \sum_{g = 1}^{m_{\phi_j}} \sum_{k = 2}^\infty \frac{ \alpha_{g, \phi_{j}}(p)^k w(p)^k }{ k p^{ks} }
\]
for $j = 1, \ldots, r$ by Lemma 2.2 in \cite{S2007}.
Using the estimate $\left| \alpha_{g, \phi_j}(p) \right| \leq 1$, 
we have
\[
\left \| h_{j, X} \right \|_\infty
= \sup_{ w_X \in \prod_{p > X} \mathbb{T}_p} \sup_{ s \in D{\phi_j} } \left| h_{j, X}(s, w_X) \right|
\leq m_{\phi_j} \sum_{p > X} \sum_{k = 2}^\infty \frac{1}{k p^{k \sigma_{\phi_j}}}
\rightarrow 0
\]
as $X \rightarrow \infty$. 
Hence we can take $X_0 > \max \mathcal{P}_N$ such that $\left \| h_{j, X_0} \right \|_\infty < \epsilon / (2r)$ for any $j = 1, \ldots, r$.

For any $p \leq X_0$, 
we put
\[
c_0 (p)
= 
\begin{dcases}
z(p) & \textrm{if $p \in \mathcal{P}_N$}, \\
1 & \textrm{otherwise}. 
\end{dcases}
\]
By the assumption, 
there exists a convergent element 
\[
\left( \sum_{p > X} \frac{a_{\phi_1}(p) c_0(p)}{p^{s}}, \ldots, \sum_{p > X} \frac{a_{\phi_r}(p) c_0(p)}{p^{s}} \right),\, c_0(p) \in \mathbb{T}
\]
such that
\[
d_{\phi_j} \left( f_j(s) - \sum_{p \leq X_0} \log \phi_{j, p} \left( s, c_0(p) \right),\, \sum_{p > X} \frac{a_{\phi_{j, r}}(p) c_0(p)}{p^{s}} \right)
< \frac{\epsilon}{2 r}
\]
for $j = 1, \ldots, r$. 
Putting $c_0 = \left( c_0 (p) \right)_p \in \Omega$, 
we have
\begin{align*}
& d \left( \left( \left( \log \phi_j (s, c_0) \right)_{j = 1}^r,\, \left( c_0(p) \right)_{p \in \mathcal{P}_N} \right),\, \left( (f_j (s) )_{j = 1}^r,\, \left( z(p) \right)_{p \in \mathcal{P}_N} \right) \right) \\
=&\, \sum_{j = 1}^r d_{\phi_j} \left( \log \phi_j (s, c_0),\, f_j(s) \right) \\
\leq&\, \sum_{j = 1}^r d_{\phi_j} \left( \sum_{p \leq X_0} \log \phi_{j, p} (s, c_0 (p)) + \sum_{p > X} \frac{a_{\phi_{j, r}}(p) c_0(p)}{p^{s}} ,\, f_j(s) \right)
+ \sum_{j = 1}^r \left \| h_{j, X_0} \right \|_\infty \\
<&\, \epsilon. 
\end{align*}
This completes the proof. 
\end{proof}

\begin{proposition}\label{prop:supp}
Let $\mathcal{S}$ be a separable metric space and let $\mathcal{B}_j$ be a separable Fr\'echet space for $j = 1, \ldots, r$.
For any $j = 1, \ldots, r$, given a sequence $(F_{n,j})_{n = 1}^\infty$ of $\mathcal{B}_j$-valued continuous functions on $\mathcal{S}$. 
Suppose that $\mathcal{M}$ is a finite subset of the set of positive integers and that $X = (X_n)_{n=1}^\infty$ is a sequence of independent $\mathcal{S}$-valued random variables on a probability space $(\Omega, \mathcal{A}, \mathbb{P})$ such that the series $\sum_{n = 1}^\infty F_{n, j} (X_n)$ converges almost surely for $j = 1, \ldots, r$. 
We define the $ \prod_{j = 1}^r \mathcal{B}_j \times \prod_{n \in \mathcal{M}} \mathcal{S}_n $-valued random variable $\Phi$ by
$\Phi (X) = \left( \left( \sum_{n = 1}^\infty F_{n,j}(X_n) \right)_{j = 1}^r,\, (X_n)_{n \in \mathcal{M}} \right)$, where $\mathcal{S}_n = \mathcal{S}$. 
Then the support of the distribution of $\Phi$ coincide with the closure of the set
\[
\left\{ \left( \left( \sum_{n = 1}^\infty F_{n, j}(x_n) \right)_{j = 1}^r ,\, (x_n)_{n \in \mathcal{M}} \right) ~;~ x_n \in \supp( X_n ) \quad {\rm and} \quad \textit{$\displaystyle \sum_{n = 1}^\infty F_{n, j}(x_n)$ {\rm converges}} \right\},
\]
where $\supp(Y)$ stands for the support of the distribution of a random variable $Y$. 
\end{proposition}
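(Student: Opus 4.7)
The strategy is to verify the two inclusions $\supp(\nu) \subseteq \overline{\mathcal{A}}$ and $\mathcal{A} \subseteq \supp(\nu)$ separately, where $\nu := \mathbb{P} \circ \Phi^{-1}$ denotes the law of $\Phi(X)$ on $\prod_{j=1}^r \mathcal{B}_j \times \prod_{n \in \mathcal{M}} \mathcal{S}_n$ and $\mathcal{A}$ stands for the set whose closure is claimed to equal $\supp(\nu)$. Since $\supp(\nu)$ is closed, the second inclusion automatically yields $\overline{\mathcal{A}} \subseteq \supp(\nu)$.

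For the first (easy) inclusion, fix $y \in \supp(\nu)$ and an open neighborhood $U$ of $y$, so that $\mathbb{P}(\Phi(X) \in U) > 0$. Because $\mathcal{S}$ is separable, the event $\{X_n \in \supp(X_n) \text{ for every } n\}$ holds almost surely as a countable intersection of a.s.\ events, and by hypothesis each series $\sum_n F_{n,j}(X_n)$ converges almost surely. Intersecting these full-probability events with $\{\Phi(X) \in U\}$ produces some $\omega_0$ satisfying all of them simultaneously; then $x_n := X_n(\omega_0)$ defines a point of $\mathcal{A}$ whose image equals $\Phi(X(\omega_0)) \in U$, so $y \in \overline{\mathcal{A}}$.

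For the harder inclusion, take $y = ((y_j)_{j=1}^r, (x_n)_{n \in \mathcal{M}}) \in \mathcal{A}$, realized by a full sequence $(x_n)_{n \geq 1}$ with $x_n \in \supp(X_n)$ and $y_j = \sum_n F_{n,j}(x_n)$, and fix $\varepsilon > 0$. The aim is to prove $\mathbb{P}(\Phi(X) \text{ is } \varepsilon\text{-close to } y) > 0$ via a head/tail splitting. Concretely: (a) pick $N \geq \max \mathcal{M}$ so large that the deterministic tails $\sum_{n>N} F_{n,j}(x_n)$ are within $\varepsilon/3$ of $0$ in $\mathcal{B}_j$ for every $j$, and moreover such that the stochastic tail event
\begin{equation*}
T_N := \bigcap_{j=1}^r \Bigl\{\, \sum_{n>N} F_{n,j}(X_n) \text{ is within } \varepsilon/3 \text{ of } 0 \text{ in } \mathcal{B}_j \,\Bigr\}
\end{equation*}
satisfies $\mathbb{P}(T_N) > 0$; this is possible because almost sure convergence of the full series forces the tails to converge to $0$ in probability. (b)~Use continuity of each $F_{n,j}$ to choose open neighborhoods $V_n \ni x_n$ for $n \leq N$ so that on $H_N := \bigcap_{n \leq N}\{X_n \in V_n\}$ the head sums $\sum_{n \leq N} F_{n,j}(X_n)$ lie within $\varepsilon/3$ of $\sum_{n \leq N} F_{n,j}(x_n)$ for every $j$ and, for $n \in \mathcal{M}$, the coordinates $X_n$ are within $\varepsilon$ of $x_n$. (c)~Observe that $\mathbb{P}(H_N) = \prod_{n \leq N} \mathbb{P}(X_n \in V_n) > 0$ by independence of the $X_n$ and by $x_n \in \supp(X_n)$, while $H_N$ depends only on $(X_1, \ldots, X_N)$ and $T_N$ only on $(X_{N+1}, X_{N+2}, \ldots)$, so the two events are independent. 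Consequently $\mathbb{P}(H_N \cap T_N) = \mathbb{P}(H_N)\mathbb{P}(T_N) > 0$, and on that event $\Phi(X)$ lies within $\varepsilon$ of $y$ by the triangle inequality, giving $y \in \supp(\nu)$.

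The main subtlety is step (a), namely arranging that $T_N$ has positive probability without any appeal to a zero–one law or ergodic theorem. The resolution is that almost sure convergence of $\sum_n F_{n,j}(X_n)$ immediately implies $\sum_{n>N} F_{n,j}(X_n) \to 0$ in probability, whence $\mathbb{P}(T_N) \to 1$ as $N \to \infty$. The independence of $H_N$ and $T_N$ is then automatic from independence of the coordinates $X_n$, because the defining index sets are disjoint once $N \geq \max \mathcal{M}$; no further probabilistic machinery is required.
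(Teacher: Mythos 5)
Your proof is correct and follows essentially the same route as the paper: you split each coordinate series into a head controlled by continuity on neighborhoods of the $x_n$, a stochastic tail made small with positive probability via almost sure convergence, and a deterministic tail made small by choosing $N$ large, then factor the probability using independence across the disjoint index ranges $\{1,\ldots,N\}$ and $\{N+1,\ldots\}$. The easy inclusion is likewise handled identically by intersecting $\{\Phi(X)\in U\}$ with the full-measure events $\{X_n\in\supp(X_n)\}$ and the convergence event.
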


\begin{proof}
Let $d_\mathcal{S}$ denote the metric on $\mathcal{S}$ and let $d_{\mathcal{B}_j}$ be the metric on $\mathcal{B}_j$ induced from a countable family of semi-norms for $j = 1, \ldots, r$. 
Note that the distance $d_\mathcal{B}$ has the translation-invariant property, that is, 
$d_{\mathcal{B}_j}(f, g) = d_{\mathcal{B}_j}(f + h, g + h)$ for $f, g, h \in \mathcal{B}_j$. 
Put $\mathcal{B} = \prod_{j =1}^r \mathcal{B}_j$ and define a metric $d_{\mathcal{B}}$ on $\mathcal{B}$ by
\[
d_{\mathcal{B}} (f, g) = \sum_{j = 1}^r d_{\mathcal{B}_j}( f_j, g_j ), \quad 
\underline{f} = \left( f_j \right )_{j = 1}^r,\, \underline{g} = \left( g_j \right )_{j = 1}^r \in \mathcal{B}.
\]
For brevity, 
we write 
\[F_j(\xi) = \sum_{n = 1}^\infty F_{n, j}( \xi_n ) \AND \underline{F} (\xi) = \left( F_j(\xi) \right)_{j = 1}^r
\]
for $\xi = (\xi)_{n = 1}^\infty$.
We also put
\[
\mathscr{X}
= \left\{ \left( \underline{F}(x) ,\, \left( x_n \right)_{n \in \mathcal{M}} \right) ~;~ x = \left( x_n \right)_{n = 1}^\infty \in \prod_{n = 1}^\infty \supp( X_n ) \quad {\rm and} \quad \textit{$\displaystyle \underline{F}(x)$ {\rm converges}} \right\}.
\]

We first prove the inclusion $\mathscr{X} \subset \supp \left( \Phi \right)$. 
We fix an element $\left( \underline{F}(x),\, (x_n)_{n \in \mathcal{M}} \right) \in \mathscr{X}$ and $\epsilon > 0$ arbitrarily, 
where $x = \left( x_n \right)_{n = 1}^\infty \in \prod_{n = 1}^\infty \supp( X_n )$.
Let $U_{d_{ \mathcal{B} }} (\underline{f}; \delta)$ denote the open ball of radius $\delta$ centered at $\underline{f}$ in the space $\mathcal{B}$, 
and let $U_{d_\mathcal{S}}(a; \delta)$ denote the open ball of radius $\delta$ centered at $a$ in the space $\mathcal{S}$.
Then we have
\begin{align*}
&\mathbb{P} \left( \underline{F} (X) \in \prod_{j = 1}^r U_{d_{ \mathcal{B} }} \left( \underline{F} (x) ~;~ \epsilon \right) \AND \left( X_n \right)_{n \in \mathcal{M}} \in \prod_{n \in \mathcal{M}} U_{d_\mathcal{S}} \left( x_n ; \epsilon \right) \right) \\
\geq&\, \mathbb{P} \left( \textrm{the condition (P) holds} \AND \left( X_n \right)_{n \in \mathcal{M}} \in \prod_{n \in \mathcal{M}} U_{d_\mathcal{S}} \left( x_n ; \epsilon \right) \right),
\end{align*}
where the condition (P) requires the following three: 
\begin{itemize}
\item[(P1)] $d_{\mathcal{B}}\left( \underline{F}(X),\, \underline{S}_N \left( (X_n)_{n =1}^N \right) \right) < \epsilon/3$, 
which is equivalent to $\underline{R}_N \left( (X_n)_{n = N + 1}^\infty \right) \in U_{\mathcal{B}} \left( \bm{0} ; \epsilon/3 \right)$, 
\item[(P2)] $d_{\mathcal{B}} \left( \underline{S}_N \left( (X_n)_{n = 1}^N \right),\, \underline{S}_N\left( (x_n)_{n =1}^N \right) \right) < \epsilon/3$, and
\item[(P3)] $d_{\mathcal{B}} \left( \underline{S}_N \left( (x_n)_{n = 1}^N \right),\, \underline{F} (x) \right) < \epsilon/3$. 
\end{itemize}
Here $\underline{S}_N( \xi_N )$ and $\underline{R}_N( \xi^N )$ is defined by $\underline{S}_N( \xi_N ) = \sum_{n = 1}^N F( \xi_n ) $ and $\underline{R}_N( \xi^N ) = \sum_{n = N + 1}^\infty ( \xi_n ) $ for $\xi_N = (\xi_n)_{n = 1}^N \in \mathbb{R}^N$, $\xi^N = (\xi_n)_{n = N + 1}^\infty \in \mathbb{R}^{\{ N + 1, N + 2, \ldots \}}$ and $N \in \mathbb{N}$. 
We note that, as for the condition $(P1)$, the almost surely convergence of $\underline{F} (X)$ yields 
\[
\lim_{N \rightarrow \infty } \mathbb{P} \left( \underline{R}_N \left( (X_n)_{n = N + 1}^\infty \right) \in U_{\mathcal{B}} \left( \bm{0} ; \epsilon/3 \right) \right) = 1.
\]
The condition $(P3)$ always holds for $N$ large enough. 
Hence there exists $N_0$ with $N_0 \geq \max \mathcal{M} + 1$ such that 
\[
\mathbb{P} \left( \underline{R}_{N_0} \left( (X_n)_{n = N_{0} + 1}^\infty \right) \in U_{\mathcal{B}} \left( \bm{0} ; \epsilon/3 \right) \right) > 0 
\AND 
d_{\mathcal{B}} \left(\underline{S}_{N_0} \left( (x_n)_{n = 1}^{N_0} \right),\, \underline{F} (x) \right) < \epsilon/3.
\]
By the independence property of $X_n$ and the choice of $N_0$, we have the inequality
\begin{align*}
&\mathbb{P} \left( \underline{F} (X) \in U_{d_{\mathcal{B}}} \left( \underline{F} (x) ; \epsilon \right) \AND \left( X_n \right)_{n \in \mathcal{M}} \in \prod_{n \in \mathcal{M}} U_{d_\mathcal{S}} \left( x_n ; \epsilon \right) \right) \\
\geq&\, \mathbb{P} \Bigg( \underline{R}_{N_0} \left( (X_n)_{n = {N_0} + 1}^\infty \right) \in U_{d_\mathcal{B}} \left( \bm{0} ; \epsilon/3 \right) ,\, 
\underline{S}_{N_0} \left( (X_n)_{n = 1}^{N_0} \right) \in U_{d_{\mathcal{B}}} \left( \underline{S}_{N_0} \left( (x_n)_{n =1}^{N_0} \right) ; \epsilon/3 \right) \\
& \quad \quad \AND \left( X_n \right)_{n \in \mathcal{M}} \in \prod_{n \in \mathcal{M}} U_{d_\mathcal{S}} \left( x_n ; \epsilon \right) \Bigg) \\
=&\, \mathbb{P} \left( \underline{R}_{N_0} \left( (X_n)_{n = {N_0} + 1}^\infty \right) \in U_{d_{\mathcal{B}}} \left( \bm{0} ; \epsilon/3 \right) \right) \\
& \times \mathbb{P} \left( \underline{S}_{N_0} \left( (X_n)_{n = 1}^{N_0} \right) \in U_{d_{\mathcal{B}}} \left(S_{N_0} \left( (x_n)_{n =1}^{N_0} \right) ; \epsilon/3 \right) 
\AND \left( X_n \right)_{n \in \mathcal{M}} \in \prod_{n \in \mathcal{M}} U_{d_\mathcal{S}} \left( x_n ; \epsilon \right) \right).
\end{align*}

Now we deduce from the continuity of the function $\underline{S}_{N_0} : \mathcal{S}^{N_0} \rightarrow \mathcal{B}$ that there exists $\delta_{N_0} > 0$ such that $(\xi_n)_{n = 1}^{N_0} \in \prod_{n = 1}^N U_{d_\mathcal{S}}( x_n ; \delta_{N_0} )$ implies $\underline{S}_{N_0} \left( (\xi_n)_{n = 1}^{N_0} \right) \in U_{d_{\mathcal{B}}} \left(S_{N_0} \left( (x_n)_{n =1}^{N_0} \right) ; \epsilon/3 \right) $. 
Therefore, letting $\epsilon_0 = \min\{ \epsilon/3,\, \delta_{N_0} \}$, 
we obtain
\begin{align*}
&\mathbb{P} \left( \underline{F} (X) \in U_{d_{\mathcal{B}}} \left( \underline{F} (x) ; \epsilon \right) \AND \left( X_n \right)_{n \in \mathcal{M}} \in \prod_{n \in \mathcal{M}} U_{d_\mathcal{S}} \left( x_n ; \epsilon \right) \right) \\
\geq&\, \mathbb{P} \left( \underline{R}_{N_0} \left( (X_n)_{n = {N_0} + 1}^\infty \right) \in U\left( \bm{0} ; \epsilon/3 \right) \right)  \times \prod_{n = 1}^{N_0} \mathbb{P} \left( X_n \in U_{d_{\mathcal{S}}} (x_n; \epsilon_0) \right) >0.
\end{align*}
This completes the proof of the inclusion $\mathscr{X} \subset \supp \left( \Phi \right)$. 
Since the set $\supp \left( \Phi \right)$ is closed, we have $\overline{\mathscr{X}} \subset \supp \left( \Phi \right)$, 
where $\overline{}A$ stands for the closure of the set $A$.

Next we prove the inverse inclusion $\overline{\mathscr{X}} \supset \supp \left( \Phi \right)$. 
Let $(\underline{f}, (\xi_{n})_{n \in \mathcal{M}}) \in \supp \left( \Phi \right)$ and fix $\epsilon > 0$.
Then the inequality 
\[
\mathbb{P} \left( \underline{F} (X) \in U_{d_\mathcal{B}} \left(\underline{f} ; \epsilon \right) \AND (X_n)_{n \in \mathcal{M}} \in \prod_{n \in \mathcal{M}} U_{d_\mathcal{S}} \left( \xi_n ; \epsilon \right) \right) > 0
\]
holds. 
Since $\mathbb{P} \left( X_n \not \in \supp(X_n) \right) = 0$ for any $n \in \mathbb{N}$, 
we have
\begin{align*}
&\mathbb{P} \left( \underline{F} (X) \in U_{d_\mathcal{B}} \left(\underline{f} ; \epsilon \right) \AND (X_n)_{n \in \mathcal{M}} \in \prod_{n \in \mathcal{M}} U_{d_\mathcal{S}} \left( \xi_n ; \epsilon \right) \right) \\
=&\, \mathbb{P} \left( \underline{F} (X) \in U_{d_\mathcal{B}} \left(\underline{f} ; \epsilon \right),\, (X_n)_{n \in \mathcal{M}} \in \prod_{n \in \mathcal{M}} U_{d_\mathcal{S}} \left( \xi_n ; \epsilon \right) \AND \textrm{$X_n \in \supp(X_n)$ for $n\ \in \mathbb{N}$ }\right).
\end{align*}
Therefore there exists $x = (x_n)_{n = 1}^\infty$ with $x_n \in \supp(X_n)$ such that
\[
\underline{f} \in U_{d_\mathcal{B}} \left( \underline{F} (x) ; \epsilon \right) 
\AND 
(\xi_n)_{n \in \mathcal{M}} \in \prod_{n \in \mathcal{M}} U_{d_\mathcal{S}} \left( x_n ; \epsilon \right),
\]
which deduce $(\underline{f}, (\xi_{n})_{n \in \mathcal{M}}) \in \overline{\mathscr{X}}$.
This completes the proof. 
\end{proof}

\begin{lemma}\label{lem:supplog}
Let the assumption be the same as in Theorem \ref{thm:main} and put 
\[
\log \underline{\phi} (s, \omega)
= \left( \log \phi_1 (s, \omega), \ldots, \log \phi_r (s, \omega) \right)
\]
for $\omega \in \Omega$. 
Then, the support of the distribution of the $\prod_{j = 1}^{r} H(D_{\phi_j}) \times \mathbb{T}^{\mathcal{P}_N}$-valued random variable 
\[
\left( \log \underline{\phi} (s, \omega), \left( \omega(p) \right)_{p \in \mathcal{P}_N} \right
)
\] 
coincides with $\prod_{j = 1}^{r} H(D_{\phi_j}) \times \mathbb{T}^{\mathcal{P}_N}$.
\end{lemma}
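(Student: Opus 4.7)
The plan is to deduce the lemma as a direct application of Proposition \ref{prop:supp} combined with the denseness result in Lemma \ref{lem:dense}. The setup is natural: take $\mathcal{S} = \mathbb{T}$, $\mathcal{B}_j = H(D_{\phi_j})$ (which is a separable Fr\'echet space under the metric $d_{\phi_j}$), and enumerate the primes as $p_1 < p_2 < \cdots$. Set $X_n = \omega(p_n)$, which by the definition of the Haar measure $\bm{m}$ is an independent sequence of $\mathbb{T}$-valued random variables each of whose distributions has full support $\mathbb{T}$. Define $F_{n,j} : \mathbb{T} \to H(D_{\phi_j})$ by $F_{n,j}(z) = \log \phi_{j, p_n}(s, z)$, and let $\mathcal{M} = \{ n : p_n \in \mathcal{P}_N \}$, which is finite.

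Next, I would check the hypotheses of Proposition \ref{prop:supp}. Continuity of each $F_{n,j}$ on $\mathbb{T}$ is immediate from the explicit series defining $\log \phi_{j,p}(s, z)$, which converges uniformly for $s$ in compact subsets of $D_{\phi_j}$ and $z \in \mathbb{T}$ (using $|\alpha_{g, \phi_j}(p)| \leq 1$). Almost sure convergence of $\sum_n F_{n,j}(X_n) = \sum_p \log \phi_{j, p}(s, \omega(p))$ is precisely the Kolmogorov-type argument already carried out in the paragraph preceding Lemma \ref{lem:dense}.

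Proposition \ref{prop:supp} then identifies the support of $\bigl( \log \underline{\phi}(s, \omega),\, (\omega(p))_{p \in \mathcal{P}_N} \bigr)$ with the closure in $\prod_{j=1}^{r} H(D_{\phi_j}) \times \mathbb{T}^{\mathcal{P}_N}$ of the set
\[
\mathscr{X} = \left\{ \Bigl( \bigl( \textstyle\sum_p \log \phi_{j, p}(s, c(p)) \bigr)_{j=1}^{r},\, (c(p))_{p \in \mathcal{P}_N} \Bigr) ~;~ c = (c(p))_p \in \Omega \text{ with convergent series} \right\}.
\]
But this is exactly the set whose closure equals $\prod_{j=1}^{r} H(D_{\phi_j}) \times \mathbb{T}^{\mathcal{P}_N}$ by Lemma \ref{lem:dense}. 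Hence the support is the full space, completing the proof.

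I do not anticipate a serious obstacle here; the substantive work has already been done in Proposition \ref{prop:supp} (the abstract support identification) and Lemma \ref{lem:dense} (the density of achievable values). The only place requiring a touch of care is verifying the independence and full-support properties of the coordinates $\omega(p)$ and checking that the almost sure convergence hypothesis of Proposition \ref{prop:supp} matches the convergence of $\log \phi(s, \omega)$ established earlier, both of which are essentially unpacking of definitions.
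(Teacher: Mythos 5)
Your proposal is correct and follows the same overall strategy as the paper, namely combining Proposition \ref{prop:supp} with the denseness statement in Lemma \ref{lem:dense}. The difference is in how the ``support of the coordinate'' step is handled. You take $\mathcal{S}=\mathbb{T}$, $X_n=\omega(p_n)$, $F_{n,j}(z)=\log\phi_{j,p_n}(s,z)$, and observe directly that each $\omega(p)$ is Haar-distributed on $\mathbb{T}$ and hence has full support $\supp(\omega(p))=\mathbb{T}$; together with the continuity of the maps $z\mapsto\log\phi_{j,p}(s,z)$ (which holds since $|\alpha_{g,\phi_j}(p)|\le1$ and $p^{\sigma}>1$ keep $1-\alpha_{g,\phi_j}(p)z/p^{s}$ away from the branch cut, uniformly on compacta), the set whose closure Proposition \ref{prop:supp} identifies with the support is exactly the set that Lemma \ref{lem:dense} asserts is dense. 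The paper instead reduces to showing that the support of the pushforward $\log\phi_p(s,\omega(p))$ equals $\{\log\phi_p(s,z)\mid z\in\mathbb{T}\}$, and establishes this with a quantitative Lipschitz-type estimate in $z$ for the forward inclusion and a compactness/continuity argument for the reverse inclusion. The two arguments are mathematically equivalent in content (full support of Haar measure plus continuity plus compactness of $\mathbb{T}$), but your version is the more streamlined reading: once one recognizes $\supp(\omega(p))=\mathbb{T}$, the pushforward support computation is already subsumed in the statement of Proposition \ref{prop:supp}, so the intermediate lemma becomes unnecessary. Both proofs are valid.
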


\begin{proof}
By Lemma \ref{lem:dense} and Proposition \ref{prop:supp}, 
it suffices to show that the support of the distribution of $\log \phi_p (s, \omega(p))$ coincides with the set $\left\{ \log \phi_p (s, z) \mid z \in \mathbb{T} \right\}$ for $\phi \in \widetilde{\mathcal{S}}$. 

First, we show that $\log \phi_p (s, z_0)$, $z_0 \in \mathbb{T}$ belongs to the set $\supp( \log \phi_p (s, \omega(p)) )$. 
Let $\epsilon > 0$ be fixed.
We have
\begin{align*}
d_{\phi} \left( \log \phi_p (s, z),\, \log \phi_p (s, z_0) \right)
&= \sum_{\ell = 1}^\infty \frac{ d_{\ell, \phi} \left( \log \phi_p (s, z),\, \log \phi_p (s, z_0) \right) \land 1 }{2^{\ell}} \\
&\leq \sum_{\ell = 1}^\infty \frac{ \sup_{s \in K_{\ell,\, \phi}} \left| \log \phi_p (s, z) - \log \phi_p (s, z_0) \right|}{2^{\ell}} \\
&\leq \sum_{\ell = 1}^\infty 2^{- \ell} \sum_{j = 1}^{m_\phi} \sum_{k = 1}^\infty \frac{ \left| \alpha_{j, \phi} (p) \right|^k | z^k - z_0^k | }{k p^{k \sigma_{\phi}}} \\
&\leq m_{\phi} \sum_{k = 1}^\infty \frac{| z - z_0 |}{p^{k \sigma_\phi}}
= \frac{ m_{\phi} | z - z_0 |}{1 - p^{- \sigma_\phi}},
\end{align*}
where we use the estimate $\left| \alpha_{j, \phi} (p) \right| \leq 1$ and $| z^k - z_0^k | \leq k \left| z - z_0 \right|$. 
Hence we have
\begin{align*}
& \bm{m} \left( d_{\phi} \left( \log \phi_p (s, \omega(p) ),\, \log \phi_p (s, z_0) \right) < \epsilon \right) \\
=&\, \int_{0}^{1} \mathbbm{1} \left \{ d_{\phi} \left( \log \phi_p (s, e^{2 \pi i \theta} ),\, \log \phi_p (s, z_0) \right) < \epsilon \right \} d \, \theta \\
\geq& \, \int_{0}^{1} \mathbbm{1} \left \{ \frac{ m_{\phi} | e^{2 \pi i \theta} - z_0 |}{1 - p^{- \sigma_\phi}} < \epsilon \right \} d \, \theta
> 0.
\end{align*}
Hence $\log \phi_p (s, z_0)$ belongs to $\supp( \log \phi_p (s, \omega(p)) )$. 

Next, we show $\supp( \log \phi_p (s, \omega(p)) )$ is a subset of the set $\left\{ \log \phi_p (s, z) \mid z \in \mathbb{T} \right\}$. 
We take $g \in \supp( \log \phi_p (s, \omega(p)) )$. 
Then, by the definition of $\supp( \log \phi_p (s, \omega(p)) )$, 
we have
\[
\bm{m} \left( d_{\phi} \left( \log \phi_p (s, \omega(p)), g(s) \right) < \frac{1}{n} \right) > 0
\]
for any $n \in \mathbb{N}$.
Hence we can take a sequence $\left( z_n \right)_{n = 1}^\infty \subset \mathbb{T}$ such that $d_{\phi} \left( \log \phi_p (s, z_n), g(s) \right) < 1/n$. 
Since $\mathbb{T}$ is compact, there exists a subsequence $\left( z_{n_k} \right)_{k = 1}^\infty$ of $\left( z_n \right)_{n = 1}^\infty$ such that $z_{n_k} \rightarrow z_0$ for some $z_0 \in \mathbb{T}$ as $k \rightarrow \infty$. 
Therefore, using the continuity of the map $\mathbb{T} \ni z \mapsto \log \phi_p (s, z) \in H(D_{\phi})$, 
we have
\[
d_{\phi} \left( \log \phi_p (s, z_0), g(s) \right)
\leq d_{\phi} \left( \log \phi_p (s, z_0), \log \phi_p (s, z_{n_k}) \right) 
+ d_{\phi} \left( \log \phi_p (s, z_{n_k}), g(s) \right)
\rightarrow 0
\]
as $k \rightarrow \infty$. 
Thus we conclude that $\log \phi_p (s, z_0) = g(s)$. 
This completes the proof. 
\end{proof}

\begin{proof}[Proof of (ii) of Theorem \ref{thm:main}]
We define the map $\Phi : \prod_{j = 1}^{r} H(D_{\phi_j}) \times \mathbb{T}^{\mathcal{P}_N} \rightarrow \prod_{j = 1}^{r} H(D_{\phi_j}) \times \mathbb{T}^{\mathcal{P}_N}$ by 
\[
\Phi \left( \left( f_j (s) \right)_{j = 1}^r, \left( x_{p_{k_n}} \right)_{n = 1}^N \right) = \left(  \left( \exp ( f_j (s) ) \right)_{j = 1}^r, \left( x_{p_{k_n}} \right)_{n = 1}^N \right).
\]
Since the map $\Phi$ is continuous, 
we have 
\[
\supp\left( \nu_{\underline{\phi},\, \mathcal{P}_N } \right) 
= \overline{ \Phi \left( \supp \left( \log \underline{\phi} (s, \omega), \left( \omega(p) \right)_{p \in \mathcal{P}_N} \right) \right)}
\]
by Lemma B.2.1 in \cite{K2021}.
By Lemma \ref{lem:supplog}, the equtions
\[
\Phi \left( \supp \left( \log \underline{\phi} (s, \omega), \left( \omega(p) \right)_{p \in \mathcal{P}_N} \right) \right)
=\Phi \left( \prod_{j = 1}^{r} H(D_{\phi_j}) \times \mathbb{T}^{\mathcal{P}_N}  \right) 
= \prod_{j = 1}^{r} H (D_{\phi_j})^{\times} \times \mathbb{T}^{\mathcal{P}_N} 
\]
holds, 
where $H(G)^{\times}$ denotes the set of non-vanishing holomorphic function on $G$ for a simply connected region $G$. 
Hence it is enough to show that
\[
\overline{ \prod_{j = 1}^{r} H (D_{\phi_j})^{\times}  \times \mathbb{T}^{\mathcal{P}_N} }
= \prod_{j = 1}^r H_0(D_{\phi_j}) \times \mathbb{T}^{\mathcal{P}_N}.
\]
This equation is proved by Hurwitz's classical result on zeros of uniformly convergent sequence of functions (see e.g. \cite[Lemma A.5.5.]{K2021}).
This completes the proof. 
\end{proof}

\begin{proof}[Proof of Corollary \ref{cor:main}]
Let every settings and assumptions be the same as in Corollary \ref{cor:main}.
By Mergelyan's approximation theorem (see e.g. \cite[Theorem 20.5]{R1987}), 
we can take a non-vanishing polynomial $P_j$ on $K_j$ such that
\begin{equation}\label{eqn:IN1}
\max_{s \in K_j} \left| f_j (s) - P_j (s) \right| < \frac{\epsilon}{3}
\end{equation}
for $j = 1, \ldots, r$.
Since $P_j$ is non-vanishing on $K_j$, 
there exists a simply connected region $G_j$ such that $K_j \subset G_j$ and $P_j$ is non-vanishing on $G_j$ for $j = 1, \ldots, r$. 
Hence we can take a holomorphic branch $\log P_j$ of $P_j$ on $G_j$.
Using Mergelyan's approximation theorem again, 
we can take a polynomial $Q_j$ such that
\begin{equation}\label{eqn:IN2}
\max_{s \in K_j} \left| P_j (s) - \exp( Q_j (s) ) \right| < \frac{\epsilon}{3}
\end{equation}
for $j = 1, \ldots, r$, 
where we use the inequality
\[
\max_{s \in K_j} \left| P_j (s) - \exp( Q_j (s) ) \right|
\leq \max_{s \in K_j} \left| P_j(s) \right| \cdot \max_{s \in K_j} \left| 1 - \exp \left( Q_j (s) - \log P_j (s) \right) \right|.
\]
Put
\[
U_j = \left\{ g \in H \left( D_\phi \right) \mid \max_{s \in K_j} \left| g(s) - \exp \left( Q_j (s) \right) \right| < \frac{\epsilon}{3} \right\}
\]
for $j = 1, \ldots, r$.
Note that $U_j$ is an open neighborhood of $\exp \left( Q_j(s) \right)$.

Next, let
\begin{align*}
V_{p_{k_n}} 
&= \left\{ z \in \mathbb{T}_{p_{k_n}} \mid d_{p_{k_n}} \left( z, \exp \left( 2 \pi i \theta_{p_{k_n}} \right) \right) < \epsilon \right\} \\
&= \left\{ \exp(2 \pi i \theta) \in \mathbb{T}_{p_{k_n}} \mid \left\| \theta - \theta_{p_{k_n}} \right\| < \epsilon \right\}
\end{align*}
for $n = 1, \ldots, N$.
Note also that $U_{p_{k_n}}$ is an open neighborhood of $\exp \left( 2 \pi i \theta_{p_{k_n}} \right)$ and that the condition $p_{k_n}^{i \tau} \in V_{k_n}$ is equivalent to $\left\| \tau \frac{\log p_{k_n}}{2 \pi} - \theta_{p_{k_n}} \right\| < \epsilon$. 
Put
\[
\bm{A} 
= \left( U_1 \times \cdots \times U_r \right) \times \left( V_{p_{k_1}} \times \cdots \times V_{p_{k_N}} \right) \subset \prod_{j = 1}^r H(D_{\phi_j}) \times \mathbb{T}^{\mathcal{P}_N}. 
\]
Then, by the portmanteau theorem (see e.g. \cite[Theorem 2.1]{B1999}), the inequality \eqref{eqn:IN1}, the inequality \eqref{eqn:IN2} and Theorem \ref{thm:main} (ii), 
we have
\def\QQ{\quad \quad \quad \quad \quad \quad \quad \quad \quad \quad \quad \quad \quad \quad \quad \quad \quad \quad \quad}
\begin{align*}
& \liminf_{T \rightarrow \infty} \frac{1}{T} \meas \Bigg{\{} \tau \in [0, T] ~;~ \max_{1 \leq j \leq n} \max_{s \in K_j}  \left|  \phi_j(s + i \tau) - f_j(s) \right| < \epsilon,\\ 
& \QQ \max_{1 \leq n \leq N} \left\| \tau \frac{\log p_{k_n}}{2 \pi} - \theta_{p_{k_n}} \right\| < \epsilon \Bigg{\}} \\ 
\geq&\, \liminf_{T \rightarrow \infty} \frac{1}{T} \meas \Bigg{\{} \tau \in [0, T] ~;~ \max_{1 \leq j \leq n} \max_{s \in K_j}  \left|  \phi_j(s + i \tau) - \exp\left( Q_j (s) \right) \right| < \frac{\epsilon}{3},\\ 
& \QQ \max_{1 \leq n \leq N} \left\| \tau \frac{\log p_{k_n}}{2 \pi} - \theta_{p_{k_n}} \right\| < \epsilon \Bigg{\}} \\
=&\, \liminf_{T \rightarrow \infty} \nu_{T,\, \underline{\phi},\, \mathcal{P}_N } \left( \bm{A} \right)
\geq \nu_{\underline{\phi},\, \mathcal{P}_N } \left( \bm{A} \right)
>0.
\end{align*}
This completes the proof.
\end{proof}

\begin{center}
\textbf{Acknowledgements.}
\end{center}

The author would like to express his gratitude to Professor Kohji Matsumoto for reading the draft of the paper and providing valuable comments. 
The author also would like to thank Professor Takashi Nakamura for pointing out a minor mistake at RIMS Workshop 2024 on Analytic Number Theory and Related Topics.

\begin{flushleft}
{\footnotesize
{\sc
Department of Electronic and Information Engineering, \\
National Institute of Technology, Suzuka College,\\
Shiroko-cho, Suzuka, Mie, 510-0294, Japan
}\\
{\it E-mail address}: {\tt endo-k@info.suzuka-ct.ac.jp}
}
\end{flushleft}

\end{document}